\documentclass[11pt,reqno]{amsart}

\usepackage{a4wide}
\usepackage{amsmath}
\usepackage{amsthm}
\usepackage{amsfonts}
\usepackage{amssymb}
\usepackage{xcolor}
\usepackage{mathtools}
\usepackage{eucal}
\usepackage{mathrsfs}

\usepackage{hyperref}

\usepackage{graphicx}

\usepackage{todonotes}
\usepackage{eucal}
\usepackage[normalem]{ulem}
\usepackage{cancel}
\usepackage{bm}

\theoremstyle{plain}
\newtheorem{thm}{Theorem}[section]
\newtheorem{lem}[thm]{Lemma}
\newtheorem{cor}[thm]{Corollary}

\theoremstyle{definition}
\newtheorem{rem}[thm]{Remark}

\DeclareMathOperator{\N}{\mathbb N}

\DeclareMathOperator{\D}{\mathbf D}
\newcommand{\RpN}{\mathbb{R}_+^\mathbb{N}}

\numberwithin{equation}{section}

\allowdisplaybreaks

\begin{document}

\author{Tu\u{g}\c{c}e \"{U}nver, Amiran Gogatishvili, Nurzhan Bokayev \and Nurgul Kuzeubayeva}

\title{Weighted Hardy inequalities involving supremum for decreasing sequences}

\address{T. \"{U}nver,
Department of Mathematics, Kirikkale University, 71450 Yahsihan, Kirikkale, T\" urkiye}
\email{tugceunver@kku.edu.tr}

\address{A. Gogatishvili,
Institute of Mathematical of the  Czech Academy of Sciences, \v Zitn\'a~25, 115~67 Praha~1, Czech Republic}
\email{gogatish@math.cas.cz}

\address{N. Bokayev,
Department of Fundamental Mathematics, L.N. Gumilyov Eurasian National University,2, Satbaev St., 010000, Astana, Kazakhstan}
\email{bokayev2011@yandex.ru}

\address{N. Kuzeubayeva,  
Department of Fundamental Mathematics, L.N. Gumilyov Eurasian National University, 2, Satbaev St., 010000, Astana, Kazakhstan}
\email{nurgul.kuzeubaeva@mail.ru}

\subjclass[2020]{47H05,46A45,46B45,26D15}

\keywords{Weighted inequalities, discrete supremum operator, discrete Hardy operator, reduction theorems}

\thanks{The research was supported by a grant from the Ministry of Education and Science of the Republic of Kazakhstan (project no. AP26196065). The research of  A.~Gogatishvili was partially supported by the grant project 23-04720S of the Czech Science Foundation (GA\v{C}R), The Institute of Mathematics, CAS is supported by RVO:67985840, by  Shota Rustaveli National Science Foundation (SRNSF), grant no: FR22-17770.}
\begin{abstract}
In this paper, we provide a complete characterization of the weighted Hardy inequalities involving the supremum operator,  restricted to the cone of non-increasing sequences, for all positive parameters. We reduce such inequalities to equivalent ones on the cone of non-negative sequences. The latter setting provides a broader framework for analysis and significantly expands the range of proofs that can be established. 
\end{abstract}

\maketitle


\section{Introduction}

Let $0< p, q < \infty$ be fixed parameters, and let $\bm{u}= \{u(m)\}_{m\in \mathbb{N}}$, $\bm{a}=\{a(m)\}_{m\in \mathbb{N}}$, and $\bm{b}=\{b(m)\}_{m\in \mathbb{N}}$ be given sequences of non-negative real numbers.  Let us denote by $\RpN$ the space of all sequences of non-negative real numbers. In this paper, we study the weighted inequality
\begin{equation}  \label{eq1}
	\bigg(\sum_{m=1}^{\infty}
	\bigg(\sup_{m\le i<\infty} u(i)\sum^i_{k=1}x(k)\bigg)^q a(m)\bigg)^{\frac{1}{q}} \le C
	\bigg(\sum^\infty_{m=1} x(m)^p b(m)\bigg)^{\frac{1}{p}}
\end{equation}
for all non-negative, non-increasing sequences $\bm{x}=\{x(m)\}_{m\in \mathbb{N}}$. Our goal is to identify the necessary and sufficient conditions on the sequences $\bm{u}, \bm{a}, \bm{b}\in \RpN$ under which inequality \eqref{eq1} is valid for some constant $C > 0$, independent of the choice of $\bm{x}$. The characterization of inequality \eqref{eq1} for arbitrary non-negative sequences $\bm{x}$ without any monotonicity assumption was obtained in \cite{GogKrep}. 

Note that, for a non-increasing sequence $\bm{x}$, the sequence $\big\{\frac{1}{i} \sum_{k=1}^i x(k)\big\}_{i\in \mathbb{N}}$ is also non-increasing. Therefore, for any $m\geq 1$, we have 
\[ \sup_{m\le i<\infty} \frac{1}{i}\sum_{k=1}^{i} x(k)=\frac{1}{m} \sum_{k=1}^m x(k).\]
In particular, if we choose $u(i)=\frac{1}{i}$ in \eqref{eq1}, we recover the discrete weighted Hardy inequality. The characterization of weighted Hardy inequalities for decreasing sequences was obtained by Bennett and Grosse-Erdmann in \cite{Bennett Gross 1}. Consequently, our approach yields an alternative proof of their result and, moreover, naturally extends it to the non-linear case. Specifically, our approach allows for the treatment of the supremum operator, which does not satisfy linearity.

For the discrete weighted  Hardy inequality, in the case $1 < p, q < \infty$, an effective approach is based on Sawyer's duality principle \cite{Sawyer}, which reduces inequalities of the form \eqref{eq1} for non-negative, non-increasing sequences to corresponding modified inequalities for non-negative sequences. The discrete analog of Sawyer's duality theorem was obtained by Oinarov and Shalginbaeva \cite{OinarShalg} and Bennett and Grosse-Erdmann \cite{Bennett Gross, Bennett Gross 1}. However, Sawyer's duality principle cannot be applied to the supremum operator appearing in \eqref{eq1}, since this operator is non-linear. We therefore develop a new reduction method that is valid for all parameters  $0<p,q<\infty$ and is applicable to non-linear operators.  The inclusion of this class of operators is essential, as supremal operators naturally emerge when investigating fractional maximal operators on classical Lorentz spaces \cite{CKOP,GOP}. In fact, establishing inequality \eqref{eq1} constitutes a fundamental step in analyzing the boundedness of the discrete fractional maximal operator on Lorentz sequence spaces. We intend to address this in our next study.

Reduction theorems for the weighted Hardy inequality involving the supremum for non-increasing functions in the continuous setting, namely, for inequalities involving functions rather than sequences, were obtained in \cite[Corollary~3.2]{GogPick-nrt} for $0<p\le 1$ and $0<q<\infty$. For $1 \leq p < \infty$ and $0<q<\infty $, a general reduction theorem for monotone operators was obtained in \cite{GogStep1} and \cite{GogStep2}, which contains the Hardy operator involving the supremum as a special case. A complete characterization in the continuous setting is provided in \cite{GogMus-MIA}. For a detailed history of related results, see the papers cited therein.  

In the case $1<p < \infty$, a fundamental difference arises between the discrete and continuous settings. The proof used in the continuous case cannot be applied directly to the discrete one, since the power rule, which holds in the continuous case, fails in the discrete setting (see \cite{Sinnamon}). Moreover, our approach requires a new version of Copson's inequality (Lemma~4.6), which was stated without proof in \cite{BGKKT}; for completeness, we provide the proof in this paper. In the case $0<p<1$, our proofs are essentially based on the results from \cite{GogKrep}.

Throughout the paper, we denote by $\N$ the set of natural numbers. For non-negative quantities $A$ and $B$, we write $A\lesssim B$ if $A \leq C B$ for some positive constant $C$, and $A\approx B$ if both $A\lesssim B$ and $B\lesssim A$. A \textit{weight} will always refer to a sequence of non-negative real numbers. We adopt the conventions: $0 \cdot \infty = 0$, $0/0 = 0$, $\infty/\infty = 0$, and $1/0 = \infty$ with $1/\infty = 0$. Furthermore, for any $a>0$, we set $\infty^a = \infty$, and for any $a<0$, we set $\infty^\alpha = 0$.

The paper is organized as follows. Section \ref{S3} is devoted to the main result. In Section \ref{S2}, we present a reduction theorem for weighted inequalities for supremum operators involving the Hardy operator on the cone of non-increasing sequences, while in Section \ref{S4}, we collect auxiliary results concerning discrete Hardy-type operators. Finally, the proofs of the main theorems from Sections \ref{S3} and \ref{S2} are provided in Section \ref{S5}. 

\section{Main Result} \label{S3} 

In this section, we present a complete characterization of inequality~\eqref{eq1}. To this end, we shall employ the following notation. We assume throughout that $\bm{a}$ and $\bm{b}$ are sequences of non-negative terms. Their partial sums are denoted by
\[
A(m) =\sum^m_{k=1}a(k), \quad B(m) =\sum^m_{k=1}b(k), \quad m\in {\mathbb N}.
\] 
We define
\[
B(\infty) = \lim_{m \to \infty} B(m).
\]

Let $\bm{u}$ be a given non-negative sequence, we define 
\begin{equation}\label{def:z(m)}
	z(m) := \sup_{m\le k < \infty} u(k),
\end{equation}
and the sequence $\bm{z}=\{z(m)\}$ is called the \emph{decreasing upper envelope of} $\bm{u}$.

\begin{thm} \label{Th0}
Let $0<p, q<\infty$. Assume that $\bm{a}$, $\bm{b}$ and $\bm{u}$ are given non-negative sequences with $\,b(1)>0$ and $B(\infty) < \infty$. Then inequality \eqref{eq1} holds  for all non-negative, non-increasing sequences $\bm{x}$ if and only if
\begin{equation} \label{eq0}
C_0 :=\bigg(\sum_{m=1}^\infty\bigg(\sup_{m\le i<\infty} u(i) i \bigg)^qa(m)\bigg)^{\frac{1}{q}}  B(\infty)^{-\frac{1}{p}}<+\infty.
\end{equation}
Moreover, if  $C$ denotes the best constant in \eqref{eq1},  then 
\begin{equation}\label{C0-estimate}
C_0\le C \leq C_0
b(1)^{-\frac{1}{p}} B(\infty)^{\frac{1}{p}}.
\end{equation}
\end{thm}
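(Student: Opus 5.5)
The plan is to prove the two inequalities in \eqref{C0-estimate} directly. Both rest on elementary pointwise bounds for non-increasing sequences and need no earlier result.

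\emph{Lower bound $C_0\le C$.} We test \eqref{eq1} with the constant sequence $x(k)=1$ for all $k$, which is non-negative and non-increasing. Then $\sum_{k=1}^i x(k)=i$, so the left-hand side of \eqref{eq1} equals $\big(\sum_m (\sup_{m\le i} u(i)i)^q a(m)\big)^{1/q}$. The right-hand side equals $C B(\infty)^{1/p}$, which is finite because $B(\infty)<\infty$, so the test sequence is admissible. Dividing by $B(\infty)^{1/p}>0$ (positive since $b(1)>0$) gives $C_0\le C$. This also shows that \eqref{eq0} is necessary.

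\emph{Upper bound.} Let $\bm{x}$ be non-negative and non-increasing. Since $x(k)\le x(1)$ for every $k$, we have $\sum_{k=1}^i x(k)\le i\,x(1)$. Hence for every $m$,
\[
\sup_{m\le i<\infty} u(i)\sum_{k=1}^i x(k)\le x(1)\sup_{m\le i<\infty} u(i)\,i .
\]
It follows that the left-hand side of \eqref{eq1} is at most $x(1)\big(\sum_m(\sup_{m\le i}u(i)i)^q a(m)\big)^{1/q}=x(1)\,C_0B(\infty)^{1/p}$. On the other hand, $x(1)=\big(x(1)^p b(1)\big)^{1/p} b(1)^{-1/p}\le b(1)^{-1/p}\big(\sum_m x(m)^p b(m)\big)^{1/p}$. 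Combining the two estimates yields \eqref{eq1} with constant $C_0 b(1)^{-1/p}B(\infty)^{1/p}$. This proves sufficiency and the right-hand inequality of \eqref{C0-estimate}.

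There is no real obstacle here. The only points needing care are the conventions for degenerate cases. If some $\sup_{m\le i} u(i)i$ is infinite while $a(m)>0$, then $C_0=\infty$ and both sides are consistent. The hypotheses $b(1)>0$ and $B(\infty)<\infty$ are exactly what make the test sequence admissible and the ratio $B(\infty)/b(1)$ finite.
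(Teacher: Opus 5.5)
Your proof is correct and follows the paper's argument essentially verbatim. Testing \eqref{eq1} with $\bm{x}=(1,1,\ldots)$ gives $C_0\le C$, and the upper estimate in \eqref{C0-estimate} comes from the pointwise bounds $\sum_{k=1}^i x(k)\le i\,x(1)$ and $x(1)\le b(1)^{-1/p}\big(\sum_m x(m)^p b(m)\big)^{1/p}$.
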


\begin{rem}
It should be noted that $C_0$ is not equivalent to the best constant in \eqref{eq1}, as the upper estimate in \eqref{C0-estimate} depends on $\bm{b}$. 
\end{rem}
In the case where $B(\infty) = \infty$, the constant $C_0$ is zero by convention, consistent with the definition in \eqref{eq0}.

\begin{thm} \label{thmmain}
Let $0<p, q<\infty$. Assume that $\bm{a}, \bm{b}$, and $\bm{u}$ are given non-negative sequences with $b(1)>0$. Then inequality \eqref{eq1} holds for all non-negative, non-increasing sequences $\bm{x}$ if and only if 
	
{\textup{ (i)}} $1 < p \le q$ and $\max\{C_0, C_1, C_2, C_3\} < \infty$, where
\begin{align*}
C_1 & := \sup_{1\le n<\infty}  z(n)A(n)^{\frac{1}{q}}\Bigg (\sum_{m=1}^{n} \frac{ B(m)^{\frac{1}{1-p}} b(m+1)}{m^{\frac{p}{1-p}} B(m+1)}\Bigg)\sp{\frac{p-1}{p}},\\
C_2 & := \sup_{1\le n<\infty} \Bigg( \sum_{i=n}^{\infty} z(i)^q a(i) \Bigg)^{\frac{1}{q}} \Bigg (\sum_{m=1}^{n} \frac{ B(m)^{\frac{1}{1-p}} b(m+1)}{m^{\frac{p}{1-p}} B(m+1)}\Bigg)\sp{\frac{p-1}{p}},\\
C_3 & := \sup_{1\le n<\infty} \Bigg( \sum_{i=1}^{n} a(i) \sup_{i\le j\le n} u(j)^q j^q \Bigg)^{\frac{1}{q}} B(n)^{-\frac{1}{p}}.
\end{align*}
	
{\textup{ (ii)}} $1 < p$, $q<p$ and $\max\{C_0, C_4, C_5, C_6, C_7\} < \infty$, where
\begin{align*}
C_4 &  := \Bigg( \sum_{n=1}^{\infty}  \Bigg(\sum_{i= n}^{\infty} z(i)^q a(i)\Bigg)^{\frac{q}{p-q}} z(n)^{q} a(n) 
\Bigg(\sum_{m=1}^{n}\frac{ B(m)^{\frac{1}{1-p}} b(m+1)}{m^{\frac{p}{1-p}} B(m+1)}\Bigg) ^{\frac{(p-1)q}{p-q}} \Bigg)^\frac{p-q}{pq},\\
C_5 & := \Bigg( \sum_{n=1}^{\infty} A(n)^{\frac{q}{p-q}} a(n) 
\sup_{n\le k < \infty} z(k)^{\frac{pq}{p-q}} \Bigg(\sum_{m=1}^{k} \frac{ B(m)^{\frac{1}{1-p}} b(m+1)}{m^{\frac{p}{1-p}} B(m+1)}\Bigg)^{\frac{(p-1)q}{p-q}} \Bigg)^\frac{p-q}{pq},\\
C_6 & := \Bigg( \sum_{n=1}^{\infty} A(n)^{\frac{q}{p-q}} a(n) \sup_{n\le k < \infty} u(k)^{\frac{pq}{p-q}} k^{\frac{pq}{p-q}} B(k)^{-\frac{q}{p-q}}\Bigg)^\frac{p-q}{pq} ,\\
C_7	& := \Bigg( \sum_{n=1}^{\infty} \Bigg(\sum_{i=1}^n a(i) \sup_{i\le j\le n} u(j)^q j^q \Bigg)^{\frac{q}{p-q}} a(n) \sup_{n \le k <\infty} u(k)^q k^q B(k)^{-\frac{q}{p-q}} \Bigg)^\frac{p-q}{pq}.
\end{align*}
	
{\textup{(iii)}} $0<p\le 1$, $p\le q$ and $\max\{C_8, C_9, C_{10}\} < \infty$, where
\begin{align*}
C_8 & := \sup_{1\le n<\infty} z(n) A(n)^{\frac 1q} \sup_{1\le m\le n} m B(m)^{-\frac{1}{p}}, \\
C_9 & := \sup_{1\le n<\infty}\Bigg( \sum_{k=n}^{\infty} z(k)^q a(k) \Bigg)^{\frac 1q} \sup_{1\le m\le n} m B(m)^{-\frac{1}{p}},\\
C_{10} & := \sup_{1\le n<\infty}\Bigg( \sum_{i=1}^n a(i)\sup_{i\le j\le n}u(j)^q j^q  \Bigg)^{\frac 1q} B(n) ^{- \frac{1}{p}}.
\end{align*}
	
{\textup{ (iv)}} $0<q<p\le 1$ and $\max\{C_{11}, C_{12}, C_{13}\} < \infty$, where
\begin{align*}	
C_{11} & :=  \Bigg( \sum_{n=1}^{\infty} \Bigg(\sum_{i=n}^{\infty} z(i)^q a(i) \Bigg)^{\frac{q}{p-q}}	z(n)^{q} a(n) \sup_{1\le k\le n} k^{\frac{pq}{p-q}}B(k)^{-\frac{q}{p-q}}\Bigg)^{\frac{p-q}{pq}},\\
C_{12}& := \Bigg(\sum_{n=1}^{\infty} A(n)^{\frac{q}{p-q}} a(n) \sup_{n\le k <\infty} z(k)^{\frac{pq}{p-q}} k^{\frac{pq}{p-q}}B(k)^{-\frac q{p-q}}\Bigg)^{\frac{p-q}{pq}},\\
C_{13} & :=\Bigg( \sum_{n=1}^{\infty} \Bigg(\sum_{i=1}^n a(i) \sup_{i\le j\le n} u(j)^q j^q \Bigg)^{\frac{q}{p-q}} a(n) \sup_{n \le k < \infty} u(k)^q k^q   B(k)^{-\frac{q}{p-q}} \Bigg)^{\frac{p-q}{pq}}.
\end{align*}
Moreover, if $C$ denotes the best constant in~\eqref{eq1}, then
\begin{equation*}
C \approx
\begin{cases}
C_0 + C_1+C_2+C_3 &\text{in the case \textup{(i)},} \\
C_0 + C_4 + C_5 + C_6 + C_7  &\text{in the case \textup{(ii)},} \\
C_8 + C_9 + C_{10}  &\text{in the case \textup{(iii)},} \\
C_{11}+ C_{12}+ C_{13}  &\text{in the case \textup{(iv)},}
\end{cases}
\end{equation*}
where the multiplicative constants in all the equivalences above depend only on $p$ and $q$.
\end{thm}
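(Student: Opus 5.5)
We prove Theorem~\ref{thmmain} in two stages: first reduce \eqref{eq1} on the cone of non-increasing sequences to inequalities for arbitrary non-negative sequences, then characterize those with known discrete Hardy-type results. We represent a non-negative non-increasing $\bm{x}$ (with $\lim x=0$, the constant part being handled separately) as $x(k)=\sum_{j\ge k} h(j)$ with $h\ge 0$. Then
\[
\sum_{k=1}^i x(k)=\sum_{j=1}^{\infty} h(j)\min\{i,j\} = \sum_{j=1}^{i} j\,h(j)+ i\sum_{j>i}h(j).
\]
The supremum operator applied to this splits, up to a factor $2$, into two pieces:
\[
\sup_{i\ge m} u(i)\sum_{j\le i} j h(j) \qquad\text{and}\qquad \sup_{i\ge m} u(i)\, i \sum_{j>i} h(j).
\]
A constant sequence $x\equiv c$ contributes the term $c\sup_{i\ge m}u(i)i$, which is exactly what $C_0$ in \eqref{eq0} controls; Theorem~\ref{Th0} handles this part, and that is why $C_0$ appears in cases (i),(ii). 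In cases (iii),(iv) we expect $C_0$ to be dominated by $C_8$ and $C_9$ (respectively $C_{11}$ and $C_{12}$) after letting $n\to\infty$.

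For the right-hand side we use the reduction theorem of Section~\ref{S2}. For $0<p\le1$ we use the standard fact that the supremum over the cone of decreasing sequences of $\bigl(\sum_{j\le n}x\bigr)/\|x\|_{\ell^p(b)}$ is attained on characteristic sequences $\chi_{[1,n]}$. This yields quantities $\sup_{m\le n} m B(m)^{-1/p}$, and the problem becomes an inequality for $h\ge0$ with weight built from $B$. For $1<p$, duality is not available because the operator is non-linear. We instead discretize the level sets of $B$ and use the discrete Copson-type inequality (Lemma~4.6). This produces the weight
\[
\frac{B(m)^{\frac1{1-p}}b(m+1)}{m^{\frac p{1-p}}B(m+1)},
\]
which replaces the continuous ``power rule'' expression $(B(t)/t^p)^{1/(1-p)}$ that fails discretely.

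Once reduced, the two pieces are handled separately.

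\emph{First piece} (the $\sum_{j\le i} j h(j)$ term). This is a Hardy operator composed with a supremum weighted by $u(i)$. Since $\sum_{j\le i}jh(j)$ is non-decreasing in $i$, we would split the supremum at $m$ into the part $z(m)\sum_{j\le m}jh(j)$ and the genuinely supremal part over $j\ge m$. The characterization of \cite{GogKrep} for non-negative sequences, together with the auxiliary Hardy lemmas of Section~\ref{S4}, should then give $C_1,C_2$ and $C_5$, $C_4$ (and $C_8,C_9$, $C_{11},C_{12}$ for $p\le1$). Here the Hardy and Copson weighted conditions split as usual into $p\le q$ (suprema) and $q<p$ (integral forms with exponent $pq/(p-q)$).

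\emph{Second piece} (the term $i\sum_{j>i}h(j)$). Rewriting it through $B$ leads to a supremal operator with kernel $u(j)j$ restricted to $[i,n]$. This yields the ``local'' constants $C_3,C_6,C_7$, respectively $C_{10},C_{13}$, via the Gogatishvili--Kreps\'{e}k results for operators of the form $\sup_{i\le j\le n}u(j)j$.

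The main obstacle is the reduction step for $1<p$: proving that the best constant for decreasing $\bm x$ is equivalent to the best constant of the reduced inequality for non-negative $h$ with the Copson weight above. We would attempt this first by blocking $\mathbb N$ into intervals where $B$ roughly doubles, then applying Lemma~4.6 on each block. Necessity would come from testing with extremal sequences $x=\chi_{[1,n]}$ and with the Copson extremals $x(k)=\sum_{m\ge k}$(weight)$^{\cdot}$, and the equivalence constants must be kept dependent only on $p,q$. Sufficiency then reduces to summing the estimates for the pieces.
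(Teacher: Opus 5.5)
\textbf{Verdict: the proposal has a genuine gap.} Your architecture matches the paper's, but the reduction step it relies on is left unproven, and the plan you give for it does not work.

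\textbf{What matches.} Writing $x$ as tail sums and splitting $\sum_{k\le i}x(k)=\sum_{j\le i}jh(j)+i\sum_{j>i}h(j)$ is exactly Corollary~\ref{cor2.7}. After that, Theorems~\ref{T:main-discrete-p-big} and~\ref{T:main-antigop} produce the constants, and you identify the right dual weight.

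\textbf{The gap for $1<p$.} The step you call the main obstacle is the reduction Theorems~\ref{Th1} and~\ref{Th2}, and they carry the whole proof. The Copson-type Lemma~4.6, i.e.\ \eqref{eq12}, gives only the easy direction: test \eqref{eq1} with $x(k)=\sum_{j\ge k}y(j)$ to get \eqref{eq2}. Applied blockwise, it still bounds $\|x\|_{\ell^p(b)}$ from above by a weighted norm of $y$, which is the wrong direction for sufficiency. For sufficiency you must produce, for an arbitrary non-increasing $x$, some $y\ge0$ with $x(k)\lesssim\sum_{j\ge k}y(j)+\mathrm{const}$ and $\sum_m y(m)^pB(m+1)^{p-1}B(m)b(m+1)^{1-p}\lesssim\sum_m x(m)^pb(m)$.

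The natural choice $y(j)=x(j)-x(j+1)$ fails. For $b(1)=1$, $b(2)=\varepsilon$ and $x=(1,0,0,\dots)$, its weighted norm is $(1+\varepsilon)^{p-1}\varepsilon^{1-p}\to\infty$; this is the failure of the discrete power rule. The missing idea is the paper's. Combine the telescoping identity $\sum_{i\ge k}\frac{b(i+1)}{B(i)B(i+1)}=\frac1{B(k)}-\frac1{B(\infty)}$ with $x(k)B(k)\le p^{1/p}\big(\sum_{m\le k}B(m)^{p-1}b(m)x(m)^p\big)^{1/p}$, which follows from monotonicity and \eqref{prl1}. This yields
\[
x(k)\le p^{\frac1p}\sum_{i\ge k}y(i)+B(\infty)^{-\frac1p}\Big(\sum_{m}x(m)^pb(m)\Big)^{\frac1p},\qquad y(i)=\frac{b(i+1)}{B(i)B(i+1)}\Big(\sum_{m\le i}B(m)^{p-1}b(m)x(m)^p\Big)^{\frac1p}.
\]
Fubini and \eqref{prl2-} then control the weighted norm of this smoothed $y$. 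This is also where $C_0$ enters for general $x$, not only for constants. $C_0$ is needed once more to replace the constant delivered by Theorem~\ref{T:main-antigop} and \eqref{prl2-}, which carries $\big(B(n)^{-\frac1{p-1}}-B(\infty)^{-\frac1{p-1}}\big)^{\frac{p-1}p}$, by $C_3$ (similarly for $C_6$ and $C_7$).

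\textbf{The gap for $0<p\le1$.} Extremality of characteristic sequences for the single functional $\sum_{j\le n}x(j)$ does not by itself reduce the $\ell^q(a)$-norm of a supremal operator.
\begin{itemize}
\item In case (iii) it can be rescued: write $x^p=\sum_n(x(n)^p-x(n+1)^p)\chi_{[1,n]}$ and use Minkowski in $\ell^{1/p}$ and $\ell^{q/p}$. You would still have to show $\sup_nB(n)^{-1/p}\big\|\sup_{i\ge\cdot}u(i)\min(i,n)\big\|_{\ell^q(a)}\approx C_8+C_9+C_{10}$, which you do not do.
\item In case (iv), $q/p<1$, so that Minkowski step fails. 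Testing on characteristic sequences then gives only sup-type necessary conditions, whereas $C_{11}$--$C_{13}$ are of integral type.
\end{itemize}
The paper instead substitutes $x(k)=(\sum_{j\ge k}y(j))^{1/p}$ and gets the upper bound from Lemma~\ref{lemma_03}. For the lower bound \eqref{eq3}$\Rightarrow$\eqref{eq4}, it passes through the supremal form \eqref{eq7} and invokes the sup-versus-sum equivalences of Theorems~\ref{T:ekv-antigop} and~\ref{T:ekv-gop}.

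This cannot be done pointwise. Take $y$ supported on $[1,N]$ with $\sum_{j\ge k}y(j)=k^{-p}$. Then $\big(\sum_{j\le N}j^py(j)\big)^{1/p}\gtrsim(\log N)^{1/p}$, while $\sum_{k\le N}\big(\sum_{j\ge k}y(j)\big)^{1/p}\approx\log N$. Neither this ingredient nor a substitute for it appears in your proposal.
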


\section{Reduction theorems for weighted inequalities on the cone of non-increasing sequences} \label{S2}

This section is devoted to the presentation of the reduction theorems. 

\begin{thm} \label{Th1}
Let $0<q<\infty$ and $1 < p<\infty$. Assume that $\bm{a}, \bm{b}$ and $\bm{u}$ are given non-negative sequences with $b(1)>0$. Then inequality \eqref{eq1} holds for all non-negative, non-increasing sequences $\bm{x}$ if and only if $C_0<+\infty$ and the inequality 
\begin{align}\label{eq2}
\bigg(\sum_{m=1}^\infty\bigg(\sup_{m\le i<\infty} u(i) \sum_{k=1}^i \sum_{j=k}^\infty y(j) \bigg)^q a(m)\bigg)^{\frac{1}{q}}\notag\\
&\hskip-4cm\leq \mathcal{D} \bigg(\sum_{m=1}^\infty y(m)^pB({m+1})^{p-1}B(m) b({m+1})^{1-p}\bigg)^{\frac{1}{p}}  
\end{align}
holds for all non-negative sequences $\bm{y}=\{y(m)\}_{m \in \mathbb{N}}$.  Moreover, if $C$ and $\mathcal{D}$ denote the best constants in \eqref{eq1} and \eqref{eq2}, respectively, then $C\approx \mathcal{D}+ C_0$, where $C_0$ is defined in \eqref{eq0} and the implicit multiplicative constants in the equivalence depend only on $p$ and $q$.
\end{thm}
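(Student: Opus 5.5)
Every non-negative, non-increasing sequence $\bm{x}$ with $x(m)\to c\ge 0$ decomposes as $x(k)=c+\sum_{j=k}^\infty y(j)$, where $y(j)=x(j)-x(j+1)\ge 0$. I would use this to split \eqref{eq1} into a ``constant part'' and a ``pure potential part''. For the constant $x\equiv c$ we have $\sum_{k\le i}x(k)=ci$, so the left side of \eqref{eq1} equals $c$ times the first factor of $C_0$, while the right side equals $c\,B(\infty)^{1/p}$. This immediately gives $C_0\le C$, and also forces $C_0<\infty$ whenever \eqref{eq1} holds. Since the supremum operator is subadditive and the $\ell^q$ quasinorm is a quasinorm, the left side of \eqref{eq1} is bounded, up to a constant depending on $q$, by the sum of the contributions of $c$ and of $\sum_{j\ge k}y(j)$.

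\emph{Sufficiency.} I would bound the constant part by $C_0\, c\,B(\infty)^{1/p}\le C_0\|x\|_{\ell^p(b)}$, using $c\le x(m)$ for all $m$. The potential part is bounded by $\mathcal D\,\|y\|_{\ell^p(w)}$, where $w(m)=B(m+1)^{p-1}B(m)b(m+1)^{1-p}$. The key step is then the estimate
\[
\sum_m y(m)^p\,B(m+1)^{p-1}B(m)\,b(m+1)^{1-p}\lesssim \sum_m x(m)^p b(m).
\]
Since $y(m)=x(m)-x(m+1)$, this should follow from a discrete ``reverse'' inequality for decreasing sequences. Summation by parts gives $\sum_m x(m)^pb(m)\ge\sum_m (x(m)^p-x(m+1)^p)B(m)$, and because $p>1$ we have $x(m)^p-x(m+1)^p\ge (x(m)-x(m+1))^p$. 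That alone only yields weight $B(m)$, whereas the required weight carries the extra factor $(B(m+1)/b(m+1))^{p-1}\ge 1$. So the elementary approach is insufficient, and I expect to need a sharper argument here. One option is to normalize by choosing $y$ optimally. The other is to reverse the roles: first show that the left side of \eqref{eq2} is controlled by its value on the specific $y$ with $\sum_{j\ge k}y(j)$ being the ``least decreasing majorant''.

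\emph{Necessity of \eqref{eq2}.} Given $y\ge0$, I would construct a non-increasing $x$ whose partial sums dominate $\sum_{k\le i}\sum_{j\ge k}y(j)$ and whose $\ell^p(b)$ norm is controlled by $\|y\|_{\ell^p(w)}$. The natural candidate is an averaged sequence
\[
x(k)=\sum_{j\ge k}\frac{b(j+1)}{B(j+1)}\,\frac{1}{b(j+1)}\Big(\ldots\Big),
\]
that is, a Hardy-type average with respect to $b$ designed so that $\sum_{k\le i}x(k)\gtrsim\sum_{k\le i}\sum_{j\ge k}y(j)$. Bounding $\|x\|_{\ell^p(b)}$ then reduces to a weighted Copson-type inequality, namely the version of Copson's inequality mentioned in the introduction (the one stated without proof in \cite{BGKKT}), with weights $b(m+1)/B(m+1)$. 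I would prove that inequality via a discrete Hardy argument, choosing the test function $B^{-\alpha}$ with $0<\alpha<1/p'$. This step, realizing $\sum_j y(j)$ through a decreasing $x$ whose $b$-norm matches the weight in \eqref{eq2}, is the main obstacle. The reason is that the discrete power rule fails, so the factors $B(m+1)$ versus $B(m)$ must be tracked carefully. This is exactly why the weight in \eqref{eq2} mixes $B(m)$, $B(m+1)$ and $b(m+1)$.

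Assembling the pieces gives $C\lesssim C_0+\mathcal D$ and $\mathcal D\lesssim C$, with all constants depending only on $p$ and $q$, which proves $C\approx\mathcal D+C_0$.
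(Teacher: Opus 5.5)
\textbf{Sufficiency: the main gap.} The genuine gap is in sufficiency, which is the substantive half of the theorem. Your handling of the constant part is fine: $C_0\le C$ follows by testing with $\bm{x}\equiv 1$, and the limit $c$ contributes at most $C_0\,cB(\infty)^{1/p}\le C_0\|x\|_{\ell^p(b)}$. The problem is the exact representation $x(k)=c+\sum_{j\ge k}y(j)$ with $y(j)=x(j)-x(j+1)$. The estimate you need for it is false, not merely hard. Take $x=(1,0,0,\ldots)$ and any $\bm{b}$ with $b(2)=0$. Then $y(1)^pB(2)^{p-1}B(1)b(2)^{1-p}=\infty$, while $\sum_m x(m)^pb(m)=b(1)$. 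Your two suggested remedies are not specified, so no bound $C\lesssim C_0+\mathcal{D}$ is actually obtained.

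The missing idea is to give up exact representation. Instead, \emph{dominate} $x$ pointwise by a potential of a different sequence plus a constant; this suffices because the left side of \eqref{eq1} is monotone in $\bm{x}$. Split
\[
x(k)=B(k)\Big(\frac{1}{B(k)}-\frac{1}{B(\infty)}\Big)x(k)+\frac{B(k)}{B(\infty)}x(k).
\]
By \eqref{prl1} and the monotonicity of $\bm{x}$, $B(k)x(k)\le p^{1/p}S(k)^{1/p}$, where $S(k)=\sum_{m=1}^k B(m)^{p-1}b(m)x(m)^p$. Combine this with the telescoping identity $\frac{1}{B(k)}-\frac{1}{B(\infty)}=\sum_{i\ge k}\frac{b(i+1)}{B(i)B(i+1)}$ and the fact that $S$ is non-decreasing. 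You get $x(k)\le\sum_{i\ge k}\tilde y(i)+B(\infty)^{-1/p}\|x\|_{\ell^p(b)}$, where
\[
\tilde y(i)=p^{1/p}\,\frac{b(i+1)}{B(i)B(i+1)}\,S(i)^{1/p}.
\]
The factor $b(i+1)$ is exactly what cancels the $b(i+1)^{1-p}$ in the weight of \eqref{eq2}:
\[
\tilde y(i)^pB(i+1)^{p-1}B(i)b(i+1)^{1-p}=p\,\frac{b(i+1)}{B(i)^{p-1}B(i+1)}\,S(i).
\]
After Fubini, \eqref{prl2-} with exponent $p-1$ bounds the total by $p\max\big(\frac{1}{p-1},1\big)\sum_m x(m)^pb(m)$. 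The constant term $B(\infty)^{-1/p}\|x\|_{\ell^p(b)}$ is then absorbed by $C_0$, just as in your argument.

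\textbf{Necessity: simpler than you make it.} Here you misplace the difficulty, and your averaged sequence is left with a placeholder. No averaging is needed: take $x(k)=\sum_{j\ge k}y(j)$ itself, which is non-increasing. For this $\bm{x}$, the left side of \eqref{eq1} is literally the left side of \eqref{eq2}. Set $w(m)=B(m+1)^{p-1}B(m)b(m+1)^{1-p}$. The Copson variant \eqref{eq12} gives $\|x\|_{\ell^p(b)}\le p\,2^{(p-1)/p}\|y\|_{\ell^p(w)}$, and finiteness of the right side also guarantees $x(k)<\infty$. Hence $\mathcal{D}\le p\,2^{(p-1)/p}C$.

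The paper proves \eqref{eq12} via \eqref{prl2}, Fubini, H\"older and Lemma~\ref{lemma_03}. A Hardy-criterion route like yours could also work. By \eqref{prl2-}, $\sum_{k\ge n}w(k)^{1/(1-p)}\approx B(n)^{-1/(p-1)}-B(\infty)^{-1/(p-1)}$, so $\sup_n B(n)^{1/p}\big(\sum_{k\ge n}w(k)^{1/(1-p)}\big)^{(p-1)/p}$ is bounded by a constant depending only on $p$. That argument would still have to be carried out.
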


\begin{thm} \label{Th2}
Let $0<q<\infty$ and $0<p \le 1$. Assume that $\bm{a}, \bm{b}$ and $\bm{u}$ are given non-negative sequences with $b(1)>0$. Then the following six statements are equivalent:
	
{\textup{(i)}}  Inequality \eqref{eq1} holds  for all non-negative, non-increasing sequences $\bm{x}$.

{\textup{(ii)}}  Inequality
\begin{equation}\label{eq3}
\bigg(\sum_{m=1}^\infty\bigg( \sup_{m\le i<\infty}  u(i) \sum_{k=1}^i \bigg(\sum_{j=k}^\infty y(j)\bigg)^{\frac{1}{p}} \bigg)^q a(m)\bigg)^{\frac{1}{q}}\leq \D_1\bigg(\sum_{m=1}^\infty y(m) B(m) \bigg)^{\frac{1}{p}} 
\end{equation}
holds for all non-negative sequences $\bm{y}$.
	
{\textup{(iii)}}  Inequality
\begin{equation} \label{eq4}
\bigg(\sum_{m=1}^\infty\bigg(\sup_{m\le i<\infty } u(i)^p \sum_{k=1}^i k^{p-1} \sum_{j=k}^\infty y(j) \bigg)^{\frac{q}{p}} a(m)\bigg)^{\frac{1}{q}}\leq \D_2\bigg(\sum_{m=1}^\infty y(m) B(m) \bigg)^{\frac{1}{p}} 
\end{equation}
holds for all non-negative sequences $\bm{y}$. 
	
{\textup{(iv)}}  Inequality
\begin{equation} \label{eq5}
\bigg(\sum_{m=1}^\infty \bigg(\sup_{m \le i<\infty } u(i)^p \sup_{1 \le k \le i} k^{p}  \sum_{j=k}^\infty y(j) \bigg)^{\frac{q}{p}} a(m)\bigg)^{\frac{1}{q}}\leq \D_3\bigg(\sum_{m=1}^\infty y(m) B(m) \bigg)^{\frac{1}{p}} 
\end{equation}
holds for all non-negative sequences $\bm{y}$.
	
{\textup{(v)}}  Inequality
\begin{equation} \label{eq6}
\bigg(\sum_{m=1}^\infty\bigg(\sup_{m\le i<\infty } 	u(i)\sum_{k=1}^i \bigg(\sup_{k\le j< \infty} y(j)\bigg) ^{\frac{1}{p}}\bigg)^{q} a(m)\bigg)^{\frac{1}{q}}\leq \D_4\bigg(\sum_{m=1}^\infty y(m) B(m) \bigg)^{\frac{1}{p}} 
\end{equation}
holds for all non-negative sequences $\bm{y}$.
	
{\textup{(vi)}}  Inequality
\begin{equation} \label{eq7}
\bigg(\sum_{m=1}^\infty \bigg( \sup_{m \le i<\infty } u(i)^p \sup_{1 \le k \le i} k^{p}  \sup_{k \le j < \infty} y(j) \bigg)^{\frac{q}{p}} a(m)\bigg)^{\frac{1}{q}}\leq \D_5 \bigg(\sum_{m=1}^\infty y(m) B(m) \bigg)^{\frac{1}{p}} 
\end{equation}
holds for all non-negative sequences $\bm{y}$.
	
Moreover, if  $C$ and  $\D_i$,  $i=1,\ldots,5$  denote the best constants in \eqref{eq1} and \eqref{eq3} -- \eqref{eq7}, respectively, then $C\approx \D_i$, $i=1,\ldots,5$, where the implicit multiplicative constants in the equivalence depend only on $p$ and $q$.
\end{thm}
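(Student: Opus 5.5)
The plan is to prove a cycle of comparisons between $C$ and the constants $\D_1,\dots,\D_5$. The basic tool is the change of variables that parametrizes non-increasing sequences by non-negative ones. If $\bm{x}$ is non-negative, non-increasing and $x(k)\to0$, put $y(j):=x(j)^p-x(j+1)^p\ge0$. Then $x(k)=\big(\sum_{j\ge k}y(j)\big)^{1/p}$, and by Abel summation
\[
\sum_m x(m)^p b(m)=\sum_m b(m)\sum_{j\ge m}y(j)=\sum_j y(j)B(j).
\]
Conversely, every non-negative $\bm{y}$ with $\sum_j y(j)B(j)<\infty$ produces such an $\bm{x}$. If $x(k)\to c>0$, we replace $\bm{x}$ by its truncations $x\chi_{[1,N]}$. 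These are non-increasing and tend to $0$, their right-hand sides do not exceed that of $\bm{x}$, and the left-hand side of \eqref{eq1} increases to the full one by monotone convergence. This yields (i)$\Leftrightarrow$(ii) with $C=\D_1$ exactly.

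Next we treat (v). Since $\sup_{j\ge k}y(j)\le\sum_{j\ge k}y(j)$, we get $\D_4\le\D_1$. Alternatively, $\D_4\le C$ follows directly by testing \eqref{eq1} with $x(k)=(y^*(k))^{1/p}$, where $y^*(k):=\sup_{j\ge k}y(j)$. The key estimate is
\[
\sum_k b(k)y^*(k)=\sum_k B(k)\big(y^*(k)-y^*(k+1)\big)\le\sum_k B(k)y(k),
\]
because $y^*(k)>y^*(k+1)$ forces $y^*(k)=y(k)$. For (v)$\Rightarrow$(i) we use a dyadic block decomposition. Given $\bm{x}$ non-increasing with $x\to0$, choose $k_0=1<k_1<\dots$ such that $2^{-s-1}<x(k)^p\le 2^{-s}$ on $[k_s,k_{s+1})$, discarding empty levels. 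Set $y(k_{s+1}-1):=2^{-s}$ and $y:=0$ elsewhere. Then $x(k)^p\le\sup_{j\ge k}y(j)\le 2x(k)^p$. Moreover,
\[
\sum_s 2^{-s}B(k_{s+1}-1)=\sum_k b(k)\sum_{s\ge s(k)}2^{-s}\le 4\sum_k b(k)x(k)^p,
\]
where $s(k)$ is the level of $k$. Hence $C\lesssim\D_4$.

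For (iii), (iv), (vi) we use the identities
\[
\sum_{k\le i}k^{p-1}\sum_{j\ge k}y(j)\approx\sum_j y(j)\min(i,j)^p,\qquad
\sup_{k\le i}k^p\sum_{j\ge k}y(j)\ge\sup_j y(j)\min(i,j)^p,
\]
with the sup-form in \eqref{eq7} being exactly $\sup_j y(j)\min(i,j)^p$. For a non-increasing $g=h^{1/p}$ and $p\le1$ we also have the pointwise bounds
\[
\sup_{k\le i}k^p h(k)\le\Big(\sum_{k\le i}g(k)\Big)^p\le\sum_{k\le i}\big(k^p-(k-1)^p\big)g(k)^p\le\sum_{k\le i}k^{p-1}h(k).
\]
The middle inequality is the standard concavity estimate for decreasing sequences. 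Together these give $\D_5\le\D_3\le\D_1\le\D_2$ and $\D_5\le\D_4$.

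It remains to close the cycle with $\D_2\lesssim\D_5$, and this is the main obstacle. The (iii)-operator is an $\ell^1$-type sum $\sum_j y(j)\min(i,j)^p$, whereas the (vi)-operator is a supremum of the same terms. These are not pointwise comparable: $g(k)=1/k$ already separates them. The trick must therefore be done at the level of the inequality. My first attempt is the same dyadic device as above, applied to $H(i):=\sum_j y(j)\min(i,j)^p$, which is non-decreasing and concave-like in $i$. I would pick the levels where $H$ doubles and replace $\bm{y}$ by a sparse $\tilde{\bm y}$ supported at the block endpoints. Each block value of $\tilde y$ is to be chosen so that $\sup_j\tilde y(j)\min(i,j)^p\gtrsim H(i)$ while $\sum\tilde y B\lesssim\sum yB$, the latter controlled by a geometric-series argument as in (v)$\Rightarrow$(i). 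If the bookkeeping of $B$ fails, the fallback is to characterize both \eqref{eq4} and \eqref{eq7} explicitly via the results of \cite{GogKrep} for arbitrary non-negative sequences, with weights $B(m)$ and kernel $\min(i,j)^p$, and to check that the resulting conditions coincide up to constants depending only on $p,q$.
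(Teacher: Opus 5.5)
\textbf{Gap: the proposal never proves $\D_2\lesssim\D_5$, and without it statements (iii), (iv), (vi) are not shown equivalent to (i).}

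Several parts of your argument are correct:
\begin{itemize}
\item $C=\D_1$ holds exactly. The truncation $x\chi_{[1,N]}$ is a cleaner way to handle $x(\infty)>0$ than the paper's.
\item $C\lesssim\D_4\le\D_1$ holds via your dyadic blocks. The paper never proves (v)$\Rightarrow$(i) directly.
\item The chain $\D_5\le\D_3\le\D_1\le\D_2$ and $\D_5\le\D_4$ is correct.
\end{itemize}
These show only that (i), (ii), (v) are equivalent, that (iii) implies all the others, and that (i) implies (iv) and (vi). The implications (iv)$\Rightarrow$(i), (vi)$\Rightarrow$(i) and (i)$\Rightarrow$(iii) all hinge on the single estimate $\D_2\lesssim\D_5$. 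This is exactly the step you leave as a plan.

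Your first plan does not work as described. In (v)$\Rightarrow$(i) the geometric series closes because the levels $2^{-s}$ decrease while $B(k_{s+1}-1)$ is a partial sum. The cost therefore rearranges into $\sum_k b(k)\sum_{s\ge s(k)}2^{-s}$. For $H$ the levels $2^s$ increase, and two things go wrong.
\begin{itemize}
\item The increment $H(k_s)-H(k_{s-1})=\sum_j y(j)\big(\min(k_s,j)^p-\min(k_{s-1},j)^p\big)$ involves $y(j)$ for every $j>k_{s-1}$. So it cannot be charged to a localized part of $\sum_j y(j)B(j)$.
\item A covering point at a block endpoint $k_s$ costs of order $2^sB(k_s)k_s^{-p}$. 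This can exceed $\sum_j y(j)B(j)$ by an arbitrary factor when $B$ jumps at $k_s$ and the mass of $\bm{y}$ sits just before $k_s$.
\end{itemize}
Your fallback is not a proof either. The characterizations from \cite{GogKrep} concern the inner operators $\sum_{k\le i}$ and $\sum_{k\ge i}$, not a supremum against the kernel $\min(i,j)^p$.

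\textbf{The missing idea is to split the kernel first.} We have $\sup_{k\le i}k^p\sup_{j\ge k}y(j)=\max\big\{\sup_{j\le i}j^py(j),\,i^p\sup_{j\ge i}y(j)\big\}$, so \eqref{eq7} yields two separate inequalities. Also $\sum_{k\le i}k^{p-1}\sum_{j\ge k}y(j)\le\frac1p\big(\sum_{j\le i}j^py(j)+i^p\sum_{j\ge i}y(j)\big)$. This reduces \eqref{eq4} to the same two inequalities with the inner suprema replaced by sums. The paper then passes from suprema to sums at the level of the inequalities. It uses Theorems~\ref{T:ekv-gop} and~\ref{T:ekv-antigop}, the equivalences $S_1\approx S_2$ and $V_1\approx V_2$ of \cite{GogKrep}, with $r=q/p$.

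Your dyadic device could replace these citations, but only after the split and with a different placement rule.
\begin{itemize}
\item For the tail part it is literally your (v)$\Rightarrow$(i) construction applied to $Y(i)=\sum_{j\ge i}y(j)$. The cost is at most $4\sum_k b(k)Y(k)=4\sum_j y(j)B(j)$.
\item For $W(i)=\sum_{j\le i}j^py(j)$, let $k_s$ be the first index with $W(k_s)\ge 2^s$, and let $s'<s$ be consecutive levels with non-empty blocks. Then $\sum_{j\in[k_{s'},k_s]}j^py(j)>2^{s-1}$. Put $\tilde y(j^*)=2^{s+1}(j^*)^{-p}$ at a minimizer $j^*$ of $B(j)j^{-p}$ on $[k_{s'},k_s]$, not at an endpoint. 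This covers $[k_s,k_{s+1})$ at cost at most $4\sum_{j\in[k_{s'},k_s]}y(j)B(j)$. These intervals overlap only at endpoints, and the lowest level is trivial.
\end{itemize}
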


\begin{cor}  \label{cor2.7}
Let $0 < p, q < \infty$. Assume that $\bm{a}, \bm{b}$ and $\bm{u}$ are given non-negative sequences with $b(1)>0$. Then inequality \eqref{eq1} holds  for all non-negative, non-increasing sequences $\bm{x}$ if and only if: 
	
{\textup{(i)}}  $1 < p < \infty$, $C_0<\infty$ and inequalities
\begin{align} 
&\bigg(\sum_{m=1}^{\infty}
\bigg(\sup_{m \le i < \infty} u(i)\sum_{k=1}^i y(k) \bigg)^q a(m)\bigg)^{\frac{1}{q}} \notag\\
&\hskip+3cm\leq \mathcal{D}_1 \bigg(\sum_{m=1}^\infty y(m)^p m^{-p}B(m+1)^{p-1} B(m) b(m+1)^{1-p}\bigg)^{\frac{1}{p}}\label{cor1}\\
\intertext{	and}
&\bigg(\sum_{m=1}^{\infty}
\bigg(\sup_{m \le i < \infty} u(i) i \sum_{k=i}^\infty y(k)\bigg)^q a(m)\bigg)^{\frac{1}{q}}\notag\\ 
&\hskip+3cm\leq \mathcal{D}_2 \bigg(\sum_{m=1}^\infty y(m)^p B(m+1)^{p-1} B(m) b(m+1)^{1-p} \bigg)^{\frac{1}{p}} \label{cor2}
\end{align}
hold for all non-negative sequences $\bm{y}$. Moreover, if  $C$, $\mathcal{D}_1$ and $\mathcal{D}_2$ denote the best constants in \eqref{eq1}, \eqref{cor1}  and \eqref{cor2}, respectively, then 
\[
C\approx C_0 + \mathcal{D}_1 + \mathcal{D}_2,
\]
where $C_0$ is defined in \eqref{eq0} and the implicit multiplicative constants in the equivalence depend only on $p$ and $q$.
	
{\textup{(ii)}} $ 0<p \le 1$, and inequalities
\begin{align}
\bigg(\sum_{m=1}^{\infty}
\bigg(\sup_{m\le i<\infty} u(i)^p\sum_{k=1}^i y(k)\bigg)^{\frac{q}{p}} a(m)\bigg)^{\frac{1}{q}} &\leq \mathcal{D}_3 \bigg(\sum_{m=1}^\infty y(m) m^{-p} B(m)\bigg)^{\frac{1}{p}} \label{cor3}
\end{align}
and
\begin{align}
\bigg(\sum_{m=1}^{\infty}
\bigg(\sup_{m\le i<\infty} u(i)^p i^p \sum_{k=i}^\infty y(k)\bigg)^{\frac{q}{p}} a(m)\bigg)^{\frac{1}{q}} &\leq \mathcal{D}_4 \bigg(\sum_{m=1}^\infty y(m) B(m)\bigg)^{\frac{1}{p}} \label{cor4}
\end{align}
hold for all non-negative sequences $\bm{y}$.  
Moreover, if  $C$, $\mathcal{D}_3$ and $\mathcal{D}_4$ denote the best constants in \eqref{eq1}, \eqref{cor3}  and \eqref{cor4}, respectively, then 
\[
C\approx \mathcal{D}_3+\mathcal{D}_4,
\]
where the implicit multiplicative constants in the equivalence depend only on $p$ and $q$.
\end{cor}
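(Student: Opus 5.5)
The plan is to derive Corollary~\ref{cor2.7} from the reduction theorems by splitting the inner double sum into two pieces. The key identity is
\[
\sum_{k=1}^i\sum_{j=k}^\infty y(j)=\sum_{j=1}^i j\,y(j)+i\sum_{j=i+1}^\infty y(j).
\]
Both terms on the right are non-negative and each is at most the left-hand side. Moreover, the second term dominates $\tfrac12$ of $i\sum_{j=i}^\infty y(j)$ up to the diagonal term $i\,y(i)$, and that diagonal term is already contained in the first. Hence
\[
\sum_{k=1}^i\sum_{j=k}^\infty y(j)\approx \sum_{j=1}^i j\,y(j)+i\sum_{j=i}^\infty y(j),
\]
with absolute constants. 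The supremum over $i\ge m$ of a sum of two non-negative terms is comparable to the sum of the two suprema, and $(\alpha+\beta)^q\approx\alpha^q+\beta^q$ with constants depending on $q$. Therefore the left side of \eqref{eq2} is comparable to the sum of two analogous expressions, one with inner term $\sum_{j\le i} j\,y(j)$ and one with inner term $i\sum_{j\ge i}y(j)$.

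In case (i), $1<p<\infty$, Theorem~\ref{Th1} gives $C\approx C_0+\mathcal D$. By the splitting, $\mathcal D$ is comparable to the maximum of the best constants in two inequalities with the same right-hand side as \eqref{eq2}:
\begin{itemize}
\item one with inner operator $\sum_{j\le i}j\,y(j)$;
\item one with inner operator $i\sum_{j\ge i}y(j)$, which is exactly \eqref{cor2}.
\end{itemize}
The inequality containing both pieces holds iff each separate one holds, because each piece is pointwise dominated by their sum. In the first inequality we substitute $\tilde y(j)=j\,y(j)$. This is a bijection of $\RpN$, and the right side becomes $\sum\tilde y(m)^p m^{-p}B(m+1)^{p-1}B(m)b(m+1)^{1-p}$, which is exactly \eqref{cor1}. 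Hence $\mathcal D\approx\mathcal D_1+\mathcal D_2$ and $C\approx C_0+\mathcal D_1+\mathcal D_2$.

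In case (ii), $0<p\le1$, we start from Theorem~\ref{Th2}(iii), i.e.\ \eqref{eq4}, which gives $C\approx\D_2$. Its inner expression is $\sum_{k\le i}k^{p-1}\sum_{j\ge k}y(j)$. Interchanging the sums gives
\[
\sum_{j}y(j)\sum_{k\le\min(i,j)}k^{p-1}.
\]
Since $0<p\le1$, we have $\sum_{k\le n}k^{p-1}\approx n^p$ with constants depending only on $p$ ($k^{p-1}$ is decreasing, and one compares with $\int$ for the upper bound and uses $n\cdot n^{p-1}$ for the lower bound). So the inner expression is comparable to
\[
\sum_{j\le i}j^p y(j)+i^p\sum_{j>i}y(j)\approx \sum_{j\le i}j^p y(j)+i^p\sum_{j\ge i}y(j).
\]
Splitting as before, the supremum and the power $q/p$ separate up to constants depending on $p,q$. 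The second piece is exactly \eqref{cor4}. For the first piece we substitute $\tilde y(j)=j^p y(j)$, which turns the right side $\sum y(m)B(m)$ into $\sum\tilde y(m)m^{-p}B(m)$, giving \eqref{cor3}. Therefore $\D_2\approx\mathcal D_3+\mathcal D_4$, and consequently $C\approx\mathcal D_3+\mathcal D_4$.

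The only delicate step is justifying that the two-piece inequality is equivalent to the pair of separate inequalities with comparable constants, including the treatment of the diagonal term. This is routine: every piece is pointwise bounded by the full expression, and the full expression is bounded by a constant times the sum of the pieces. The remaining steps are bookkeeping, namely the reindexing substitutions and checking that the weights match \eqref{cor1} and \eqref{cor3} exactly.
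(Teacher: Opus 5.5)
Your proposal is correct and follows essentially the same route as the paper: reduce to \eqref{eq2} via Theorem~\ref{Th1} (resp.\ to \eqref{eq4} via Theorem~\ref{Th2}), split the inner double sum by Fubini into $\sum_{j\le i} j\,y(j)$ (resp.\ $\sum_{j\le i} j^p y(j)$) and $i\sum_{j\ge i}y(j)$ (resp.\ $i^p\sum_{j\ge i}y(j)$) with two-sided pointwise bounds, and absorb the factor $j$ (resp.\ $j^p$) into the weight by substitution. The only cosmetic difference is that you verify $\sum_{k\le n}k^{p-1}\approx n^p$ directly instead of quoting \eqref{prl1}.
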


\section{Auxiliary statements} \label{S4}

We present some auxiliary statements that will be used in the proof of the main theorems.

\begin{lem}(\cite[Lemma~1]{Bennett Gross 1})\label{Fubini}
Let $\bm{a}$ and $\bm{b}$ be given non-negative sequences. Then
\begin{equation*}
\sum_{m=1}^\infty a(m) \sum_{k=1}^m b(k)< \infty\quad \text{if and only if} \quad  \sum_{m=1}^\infty b(m) \sum_{k=m}^\infty a(k) <\infty.
\end{equation*}
Moreover, 
\begin{equation}\label{eqfubini}
\sum^\infty_{m=1} a(m) \sum_{k=1}^m b(k)= \sum_{m=1}^\infty b(m) \sum_{k=m}^\infty a(k).
\end{equation}
\end{lem}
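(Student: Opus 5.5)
The identity \eqref{eqfubini} is an interchange of the order of summation for a double series with non-negative terms. The plan is to write both sides as sums over the same index set. Let
\[
\Lambda=\{(m,k)\in\N\times\N : k\le m\}
\]
and put $t(m,k)=a(m)b(k)\ge 0$ for $(m,k)\in\Lambda$. The left-hand side of \eqref{eqfubini} is the iterated sum over $\Lambda$ taken first in $k$ (for fixed $m$, $k$ runs over $1,\dots,m$), then in $m$. The right-hand side, after renaming indices, is the iterated sum taken first in $m$ (for fixed $k$, $m$ runs over $k,k+1,\dots$), then in $k$.

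The key step is to show that both iterated sums equal the unordered sum
\[
S=\sup\Big\{\sum_{(m,k)\in F} t(m,k) : F\subset\Lambda \text{ finite}\Big\}\in[0,\infty].
\]
To see this, compare each iterated sum with finite truncations in both directions.

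For the left-hand side, fix $N$. The finite set $\{(m,k)\in\Lambda : m\le N\}$ has total $\sum_{m\le N}a(m)\sum_{k\le m}b(k)$, so every partial sum is at most $S$. Conversely, any finite $F\subset\Lambda$ lies in $\{m\le N\}$ for some $N$, and its total is at most the corresponding partial sum. Letting $N\to\infty$ gives that the left-hand side equals $S$.

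For the right-hand side, the inner sums $\sum_{m\ge k}a(m)$ are themselves infinite. Here the argument is:
\begin{itemize}
\item Each partial sum $\sum_{k\le K} b(k)\sum_{m\ge k}a(m)$ is a finite sum of limits of finite partial sums over subsets of $\Lambda$, hence is at most $S$.
\item Any finite $F\subset\Lambda$ lies in $\{k\le K,\ m\le M\}$ for some $K,M$, so its total is bounded by $\sum_{k\le K}b(k)\sum_{m=k}^{M}a(m)$, which is at most the $K$-th partial sum of the right-hand side.
\end{itemize}
Hence the right-hand side also equals $S$.

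This gives the equality in $[0,\infty]$, and the ``if and only if'' statement follows immediately, since both sides are the same extended real number. The only point requiring care is the possible divergence of the inner tail sums $\sum_{m\ge k}a(m)$. With the conventions $0\cdot\infty=0$ and $b(k)\cdot\infty=\infty$ for $b(k)>0$, the inequalities above remain valid in $[0,\infty]$, so no separate case analysis is needed; this bookkeeping is the main, though minor, obstacle. Alternatively, one can simply cite Tonelli's theorem for counting measure on $\N\times\N$ applied to $t(m,k)\mathbf 1_{k\le m}$.
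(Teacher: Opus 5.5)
Your proof is correct. Identifying both iterated sums with the unordered sum of $a(m)b(k)$ over $\{(m,k): k\le m\}$, with the convention $0\cdot\infty=0$ handling divergent tails, is the standard Tonelli-type interchange of summation that this lemma expresses. The paper gives no proof of its own and only cites Bennett and Grosse-Erdmann, so you are supplying the standard justification behind that citation rather than taking a different route.
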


\begin{lem}(\cite[Lemma~1]{Bennett Gross})
Let $0<p<\infty$ and $\bm{b}$ be a non-negative sequence. If $1\le p$, then
\begin{equation} \label{prl1}
\min(p,1)\sum^n_{k=1} b(k)B(k)^{p-1}\le B(n)^{p}\le \max(p,1)\sum^n_{k=1} b(k) B(k)^{p-1}
\end{equation}
holds for every $n \in \mathbb{N}$. If $ 0 < p < 1$ and $b(1)>0$, then \eqref{prl1} holds, as well. 
\end{lem}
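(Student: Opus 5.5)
The plan is to prove \eqref{prl1} by telescoping $B(n)^p$ and comparing each increment with $b(k)B(k)^{p-1}$ term by term. Set $B(0)=0$ and write
\[
B(n)^p=\sum_{k=1}^n\big(B(k)^p-B(k-1)^p\big).
\]
It then suffices to show, for every $k$, the two-sided estimate
\[
\min(p,1)\, b(k)B(k)^{p-1}\le B(k)^p-B(k-1)^p\le \max(p,1)\, b(k)B(k)^{p-1},
\]
and to sum over $k=1,\dots,n$.

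First I deal with degenerate terms. If $b(k)=0$, then $B(k)=B(k-1)$, so the increment is $0$. The right-hand quantity $b(k)B(k)^{p-1}$ is also $0$ when $p\ge1$, and for $p<1$ by the convention $0\cdot\infty=0$. For $k=1$ the increment equals $B(1)^p=b(1)B(1)^{p-1}$ exactly, so the estimate holds because $\min(p,1)\le1\le\max(p,1)$. When $0<p<1$, the hypothesis $b(1)>0$ gives $B(k)\ge b(1)>0$ for all $k$, so every $B(k)^{p-1}$ is finite and no $\infty$ conventions are needed there. (For $p\ge1$, if $b(1)=0$ the initial zero terms are harmless as above.)

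Now assume $b(k)>0$ and $B(k-1)>0$. The mean value theorem gives
\[
B(k)^p-B(k-1)^p=p\,\xi^{p-1}b(k)\quad\text{for some }\xi\in[B(k-1),B(k)].
\]
Monotonicity of $t\mapsto t^{p-1}$ then supplies one side of the estimate in each case.

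For $p\ge1$ we have $\xi^{p-1}\le B(k)^{p-1}$, which gives the upper bound with constant $p=\max(p,1)$. For the lower bound with constant $1$, I use $B(k-1)^{p-1}\le B(k)^{p-1}$, so $B(k-1)^p\le B(k-1)B(k)^{p-1}$. Hence
\[
B(k)^p-B(k-1)^p\ge B(k)^{p-1}\big(B(k)-B(k-1)\big)=b(k)B(k)^{p-1}.
\]

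For $0<p<1$ the roles swap. Now $\xi^{p-1}\ge B(k)^{p-1}$, which gives the lower bound with constant $p=\min(p,1)$. Also $B(k-1)^{p-1}\ge B(k)^{p-1}$ gives $B(k-1)^p\ge B(k-1)B(k)^{p-1}$, which yields the upper bound with constant $1$.

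Summing the termwise estimate over $k=1,\dots,n$ proves \eqref{prl1}. There is no real obstacle here; the only point needing care is the bookkeeping of zero terms and the role of $b(1)>0$ in keeping $B(k)^{p-1}$ finite when $p<1$.
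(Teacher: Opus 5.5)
Your proof is correct. The paper does not prove this lemma itself (it is quoted from Bennett--Grosse-Erdmann), but your argument is the standard one: the termwise bound $\min(p,1)B(k)^{p-1}\bigl(B(k)-B(k-1)\bigr)\le B(k)^p-B(k-1)^p\le\max(p,1)B(k)^{p-1}\bigl(B(k)-B(k-1)\bigr)$ followed by telescoping is exactly what the paper uses to prove the companion estimate \eqref{prl2-}. Your case split omits $k\ge 2$ with $b(k)>0=B(k-1)$, which can occur only for $p\ge 1$ when $\bm{b}$ has leading zeros; it is handled just like $k=1$, since the increment then equals $b(k)^p=b(k)B(k)^{p-1}$.
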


\begin{lem}(\cite[Lemma~1']{Bennett Gross}) 
Let $0<p<\infty$ and $\bm{b}$ be a non-negative sequence.  If $1\le p$, then
\begin{equation} \label{prl2}
\min(p,1)\sum_{k=n}^{\infty}b(k) \bigg(\sum_{j=k}^{\infty} b(j)\bigg)^{p-1}\le \bigg(\sum_{k=n}^{\infty} b(k)\bigg)^{p}\le \max(p,1)\sum_{k=n}^{\infty} b(k) \bigg(\sum_{j=k}^{\infty} b(j)\bigg)^{p-1}
\end{equation}
holds for every $n \in \mathbb{N}$. If $0 < p < 1$ and $\bm{b}$ is a positive sequence, then  \eqref{prl2} holds, as well. 
\end{lem}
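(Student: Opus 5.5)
The statement to prove is the tail version \eqref{prl2} of the power rule. I will deduce it from the partial-sum version \eqref{prl1} by reversing the order of summation. First I dispose of degenerate cases. If $\sum_{k=n}^\infty b(k)=\infty$, then for $p\ge 1$ every term $b(k)\big(\sum_{j\ge k}b(j)\big)^{p-1}$ with $b(k)>0$ is infinite, so both sides are infinite. For $0<p<1$ and $b$ positive, that case also needs a check, and I would handle it directly; the substantive case is a finite tail.

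Assume $\beta(n):=\sum_{k\ge n}b(k)<\infty$. Fix $N>n$ and put $b_N(i)=b(N-i)$ for $i=1,\dots,N-n$. The partial sums of $b_N$ are then the finite tails $\sum_{j=k}^{N-1} b(j)$. Applying \eqref{prl1} to $b_N$ with upper index $N-n$ gives
\begin{equation*}
\min(p,1)\sum_{k=n}^{N-1} b(k)\bigg(\sum_{j=k}^{N-1}b(j)\bigg)^{p-1}\le \bigg(\sum_{k=n}^{N-1}b(k)\bigg)^p\le \max(p,1)\sum_{k=n}^{N-1} b(k)\bigg(\sum_{j=k}^{N-1}b(j)\bigg)^{p-1}.
\end{equation*}
For $0<p<1$ this application requires $b_N(1)=b(N-1)>0$, which holds because $b$ is positive. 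This is exactly why positivity of the whole sequence is assumed, not just of one entry.

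Next I let $N\to\infty$. The middle term tends to $\beta(n)^p$. For $p\ge1$ the inner sums increase to $\beta(k)$ and $t\mapsto t^{p-1}$ is nondecreasing, so both outer sums increase to $\sum_{k\ge n}b(k)\beta(k)^{p-1}$ by monotone convergence, and the inequality passes to the limit.

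For $0<p<1$ the factor $(\cdot)^{p-1}$ decreases as $N$ grows. The left inequality still passes to the limit: each term is dominated, and Fatou's lemma, or simply termwise convergence for a fixed finite number of terms followed by letting the number of terms grow, gives the lower estimate. The upper estimate is the delicate step. Termwise the summands decrease to $b(k)\beta(k)^{p-1}$, and I need to interchange the limit with an infinite sum. I would avoid the limit there altogether and prove the inequality $\beta(n)^p\le \sum_{k\ge n} b(k)\beta(k)^{p-1}$ directly: for $p<1$,
\[
\beta(k)^p-\beta(k+1)^p\le p\,b(k)\beta(k+1)^{p-1}\cdot\text{(mean value)}\le b(k)\beta(k)^{p-1}\cdot\text{const},
\]
or more simply, using concavity,
\[
\beta(k)^p-\beta(k+1)^p = \beta(k)^{p-1}\beta(k)-\beta(k+1)^{p-1}\beta(k+1)\le \beta(k)^{p-1}\big(\beta(k)-\beta(k+1)\big),
\]
since $\beta(k+1)^{p-1}\ge\beta(k)^{p-1}$. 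Summing this telescoping bound over $k\ge n$, and using $\beta(k)\to0$, gives the upper bound with constant $1=\max(p,1)$. The analogous argument with the inequality reversed handles the $p\ge 1$ case as well.

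The main obstacle is therefore only this limit interchange for $p<1$, and the telescoping argument bypasses it.
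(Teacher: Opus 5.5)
The paper does not prove this lemma; it only cites Bennett and Grosse-Erdmann. On its own terms, your argument is correct whenever $\beta(n)=\sum_{k\ge n}b(k)<\infty$, but it takes a detour.

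\textbf{The reversal route versus direct telescoping.} Reversing $b_N(i)=b(N-i)$ to reuse \eqref{prl1} and then letting $N\to\infty$ handles three of the four inequalities. For $p\ge1$ this is monotone convergence. For the lower bound with $p<1$ you do not even need Fatou, since $\big(\sum_{j=k}^{N-1}b(j)\big)^{p-1}\ge\beta(k)^{p-1}$ termwise. For the fourth inequality you switch to telescoping.

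That telescoping step alone proves the whole lemma. For $0\le t\le s$, $s>0$, one has $\min(p,1)\,s^{p-1}(s-t)\le s^p-t^p\le\max(p,1)\,s^{p-1}(s-t)$. Take $s=\beta(k)$, $t=\beta(k+1)$, $s-t=b(k)$, and sum over $k\ge n$ using $\beta(N)\to0$. This gives both sides of \eqref{prl2} for every $p>0$ at once, and it is exactly how the paper proves the companion estimate \eqref{prl2-}.

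The direct route also shows that, with $0\cdot\infty=0$, positivity of $b$ plays no role. In your argument positivity enters only through the requirement $b_N(1)=b(N-1)>0$ needed to invoke \eqref{prl1} for $p<1$. So it is an artifact of your reduction, not \emph{exactly why} the hypothesis is imposed.

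You should also delete the first display with \emph{(mean value)} and \emph{const}. For $p<1$ no uniform constant controls $\beta(k+1)^{p-1}/\beta(k)^{p-1}$. The concavity line that follows it is the correct argument.

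\textbf{The deferred degenerate case.} The case $0<p<1$ with $\sum_{k\ge n}b(k)=\infty$, which you say you would \emph{handle directly}, cannot be handled, because the statement fails there. Take $b\equiv1$. Every tail is infinite, so $\beta(k)^{p-1}=0$ by the paper's convention $\infty^{a}=0$ for $a<0$. The upper inequality in \eqref{prl2} then reads $\infty\le0$.

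So the lemma must be read with the implicit hypothesis that $\sum_{k\ge n}b(k)<\infty$. This holds in its only use in the paper, the proof of \eqref{eq12}, where the sums are truncated at $N$ (and $p>1$ there anyway). You should state that hypothesis rather than promise a proof.

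For $p\ge1$ your treatment of infinite tails is fine, with one correction: at $p=1$ the summands are not infinite. In that case \eqref{prl2} is simply an identity.
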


\begin{lem}(\cite[Lemma 5]{Bennett Gross})  \label{lemma_03}
Let $0<p\le 1$. Assume that $\bm{a}$ and $\bm{b}$ are given non-negative sequences. If 
\[ 
\bigg(\sum^m_{k=1}a(k)\bigg)^{p} \le \sum^m_{k=1}b(k), \quad m\in \mathbb{N}
\]
holds, then
\[ 
\bigg(\sum^m_{k=1} a(k) x(k) \bigg)^{p} \le\sum^m_{k=1}b(k) x(k)^{p}, \quad m\in \mathbb{N}\]
holds for all non-negative and non-increasing sequences $\bm{x}$.
\end{lem}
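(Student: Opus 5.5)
The statement immediately above is Lemma~\ref{lemma_03}, the discrete inequality of Bennett and Grosse-Erdmann for $0<p\le 1$. I will prove it by Abel summation: write the non-increasing sequence $\bm{x}$ as a non-negative combination of indicator sequences of initial segments, apply the hypothesis to each segment, and use subadditivity of $t\mapsto t^p$.

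\textbf{Step 1: reduction to a finite combination.} Fix $m\in\mathbb{N}$ and put $x(m+1):=0$ for the computation, which is harmless since only indices $k\le m$ enter. Set $\lambda_j := x(j)-x(j+1)\ge 0$ for $1\le j\le m$. Then $x(k)=\sum_{j=k}^m \lambda_j$ for $k\le m$, and
\[
\sum_{k=1}^m a(k)x(k)=\sum_{j=1}^m \lambda_j A(j), \qquad A(j)=\sum_{k=1}^j a(k).
\]

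\textbf{Step 2: subadditivity and the hypothesis.} Since $0<p\le 1$, the map $t\mapsto t^p$ is subadditive and homogeneous of degree $p$ on $[0,\infty)$. Applying this and then the hypothesis $A(j)^p\le B(j)$ gives
\[
\Big(\sum_{j=1}^m \lambda_j A(j)\Big)^p\le \sum_{j=1}^m \lambda_j^p A(j)^p\le \sum_{j=1}^m \lambda_j^p B(j).
\]

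\textbf{Step 3: returning to $\bm{x}$.} Abel summation again gives
\[
\sum_{j=1}^m \lambda_j^p B(j)=\sum_{k=1}^m b(k)\sum_{j=k}^m \lambda_j^p .
\]
For each $k$, the increments $\lambda_j$, $k\le j\le m$, are non-negative and sum to $x(k)$. Subadditivity in the other direction does not give the needed bound: it yields $\sum_{j=k}^m\lambda_j^p \ge x(k)^p$, which is the wrong inequality. So the naive decomposition fails at this step, and Step 3 is the main obstacle.

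\textbf{Fix: decompose $\bm{x}^p$ instead of $\bm{x}$.} Set $\mu_j := x(j)^p - x(j+1)^p\ge 0$, so that $x(k)^p=\sum_{j\ge k}\mu_j$. The right-hand side of the lemma then becomes exactly
\[
\sum_{k=1}^m b(k)x(k)^p=\sum_{j=1}^m \mu_j B(j).
\]
By the hypothesis, it therefore suffices to prove
\begin{equation}\label{eq:goal-lemma03}
\Big(\sum_{k=1}^m a(k)x(k)\Big)^p\le \sum_{j=1}^m \mu_j A(j)^p .
\end{equation}
I plan to prove \eqref{eq:goal-lemma03} by backward induction, using the pointwise fact that for $0<p\le 1$, $s,t\ge 0$ and $y\ge z\ge 0$,
\[
(s+ty)^p \le (s+tz)^p + (y^p - z^p)\,t^p .
\]
This holds because $\phi(y)=(s+ty)^p - t^p y^p$ is non-increasing in $y$: the derivative is $pt\big[(s+ty)^{p-1}-(ty)^{p-1}\big]\le 0$, since $p-1\le 0$.

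Write $S_j=\sum_{k\le j} a(k)x(k)$. Lowering the common value of $x(j),\dots,x(m)$ in stages and applying the pointwise inequality each time with $t=A(j)-A(j-1)$ plus the accumulated tail should peel off one term $\mu_j A(j)^p$ per stage. A cleaner way to organize this is to view $S_m$ as $\int_0^\infty A(N(s))\,ds$ with $N(s)=\max\{k: x(k)>s\}$, substitute $s=r^{1/p}$, and compare with $\int A(N(r^{1/p}))^p\,dr$ using
\[
\Big(\int f\,d(r^{1/p})\Big)^p\le \int f^p\,dr \quad\text{for non-increasing } f\ge 0,
\]
which is the standard inequality for $p\le1$. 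I would try this layer-cake argument first.
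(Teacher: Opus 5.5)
The paper does not prove this lemma; it quotes it from Bennett and Grosse-Erdmann \cite[Lemma~5]{Bennett Gross}. So there is no in-paper argument to compare with, and your proposal has to stand on its own. It does, with one correction to the staging and one step you should prove rather than cite.

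\textbf{Reduction.} This step is right. With $x(m+1):=0$ and $\mu_j=x(j)^p-x(j+1)^p\ge0$, Abel summation and $A(j)^p\le B(j)$ reduce the lemma to its extremal case $b(k)=A(k)^p-A(k-1)^p$. That is, it suffices to show $\big(\sum_{k=1}^m a(k)x(k)\big)^p\le\sum_{j=1}^m\mu_jA(j)^p$.

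\textbf{Staging of the pointwise inequality.} Your pointwise inequality is correct, but you lower the wrong block. The block to lower is the \emph{initial} one, $x(1),\dots,x(j)$, and you need $t=A(j)$ rather than anything built from $a(j)=A(j)-A(j-1)$. Concretely, put $F_j=\big(A(j)x(j)+\sum_{k=j+1}^m a(k)x(k)\big)^p$, so that $F_1$ is the left-hand side. Apply the pointwise inequality with $s=\sum_{k>j}a(k)x(k)$, $t=A(j)$, $y=x(j)$, $z=x(j+1)$. This gives $F_j\le F_{j+1}+A(j)^p\mu_j$, and $F_m=A(m)^p\mu_m$, which telescopes to the goal.

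\textbf{Layer-cake route.} This also works. Since $N(r^{1/p})=j$ exactly for $r\in[x(j+1)^p,x(j)^p)$, you get $\int_0^\infty A(N(r^{1/p}))^p\,dr=\sum_j A(j)^p\mu_j$. After the substitution $r=s^p$, your ``standard inequality'' becomes $\big(\int_0^\infty g\big)^p\le p\int_0^\infty g(s)^p s^{p-1}\,ds$ for non-increasing $g\ge0$. Since that is the entire content of the argument, prove it: $G(x)=\int_0^x g\ge x g(x)$ gives $(G^p)'=pG^{p-1}g\le p\,x^{p-1}g^p$, and integrating yields the claim.

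\textbf{A shorter discrete version.} The continuous argument is the discrete one in disguise, and the discrete version needs no integrals. Let $S_j=\sum_{k\le j}a(k)x(k)$. Monotonicity of $\bm{x}$ gives $S_{j-1}\ge A(j-1)x(j)$. Since $s\mapsto(s+c)^p-s^p$ is non-increasing for $0<p\le1$,
\[
S_j^p-S_{j-1}^p\le\big(A(j)x(j)\big)^p-\big(A(j-1)x(j)\big)^p=x(j)^p\big(A(j)^p-A(j-1)^p\big).
\]
Sum over $j\le m$. Then apply Abel summation to the non-increasing sequence $x(j)^p$ together with $A(j)^p\le B(j)$. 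This gives the lemma in three lines.
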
 

A version of the following lemma was proved in \cite[Lemma~2.4]{BGKKT} for the case $B(\infty)=\infty$. To meet the requirements of the present paper, we employ a slightly modified version that accounts for the case where $B(\infty)$ may be finite, and we provide the proof for completeness.

\begin{lem}\cite[Lemma~2.4]{BGKKT} 
Let $0 < p < \infty$. Assume that $\bm{b}$ is a given non-negative sequence with $b(1)>0$. 
Then, for every $m \in \mathbb{N}$,
\begin{align} \label{prl2-}
\min\bigg(\frac{1}{p}, 1\bigg) \bigg( \frac{1}{B(m)^p} - \frac{1}{B(\infty)^p} \bigg)& \le \sum_{k=m}^{\infty} \frac{b(k+1)}{B(k)^p B(k+1)} \notag\\
& \le \max\bigg(\frac{1}{p}, 1\bigg) \bigg( \frac{1}{B(m)^p} - \frac{1}{B(\infty)^p} \bigg).
\end{align}
\end{lem}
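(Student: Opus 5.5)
The plan is to reduce the series to a telescoping sum term by term, using the mean value theorem for the function $t\mapsto t^{-p}$. Since $b(1)>0$, we have $0<B(k)\le B(k+1)$ for all $k$. Fix $k$ and write $B(k+1)-B(k)=b(k+1)$. The quantity to compare with $\frac{b(k+1)}{B(k)^pB(k+1)}$ is
\[
\frac{1}{B(k)^p}-\frac{1}{B(k+1)^p}=p\,\xi^{-p-1}\,b(k+1),\qquad \xi\in[B(k),B(k+1)].
\]
Summing the right-hand differences over $k\ge m$ telescopes to $B(m)^{-p}-\lim_{N}B(N)^{-p}=B(m)^{-p}-B(\infty)^{-p}$. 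This is valid with the paper's conventions when $B(\infty)=\infty$, in which case the second term is $0$. It therefore suffices to prove the termwise two-sided estimate
\[
\min\Big(\tfrac1p,1\Big)\Big(\tfrac{1}{B(k)^p}-\tfrac{1}{B(k+1)^p}\Big)\le \frac{b(k+1)}{B(k)^pB(k+1)}\le \max\Big(\tfrac1p,1\Big)\Big(\tfrac{1}{B(k)^p}-\tfrac{1}{B(k+1)^p}\Big).
\]
If $b(k+1)=0$, both sides vanish. Otherwise, set $t=B(k)/B(k+1)\in(0,1)$ and multiply through by $B(k)^p$. The inequality becomes
\[
\min\Big(\tfrac1p,1\Big)(1-t^p)\le 1-t\le \max\Big(\tfrac1p,1\Big)(1-t^p),
\]
since $b(k+1)/B(k+1)=1-t$ and $1-B(k)^p/B(k+1)^p=1-t^p$.

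This elementary inequality is the only real content of the proof. Consider $\varphi(t)=(1-t^p)/(1-t)$ on $(0,1)$. By concavity of $t^p$ for $p\le1$, we get $1-t^p\ge 1-t$ and $1-t^p\le p(1-t)$ fails in the wrong direction. Instead, I would use the mean value form: $1-t^p=p\eta^{p-1}(1-t)$ with $\eta\in(t,1)$.

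For $p\ge1$, $\eta^{p-1}\le1$ gives $1-t^p\le p(1-t)$. Also $1-t^p\ge 1-t$, since $t^p\le t$. This yields $\tfrac1p(1-t^p)\le 1-t\le 1-t^p$, matching $\min=1/p$ and $\max=1$.

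For $0<p<1$, $t^p\ge t$ gives $1-t^p\le 1-t$, and $\eta^{p-1}\ge1$ gives $1-t^p\ge p(1-t)$. Hence $1-t^p\le 1-t\le \tfrac1p(1-t^p)$, matching $\min=1$ and $\max=1/p$.

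Finally, I will multiply back by $B(k)^{-p}$ and sum over $k\ge m$. Nonnegative series allow summation of the inequalities, including the case where sums are infinite, although they are not infinite here because the telescoping sum is bounded by $B(m)^{-p}$. This gives \eqref{prl2-}.

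The main point requiring care is the telescoping limit when $B(\infty)<\infty$ versus $=\infty$, handled by the conventions. Everything else is routine.
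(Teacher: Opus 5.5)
Your proof is correct and follows essentially the same route as the paper: a termwise two-sided estimate equivalent to $\min(p,1)(1-t)\le 1-t^p\le \max(p,1)(1-t)$ with $t=B(k)/B(k+1)$, followed by telescoping. The paper writes the same inequality as $b^p-a^p$ versus $b^{p-1}(b-a)$ with $b=B(k)^{-1}$, $a=B(k+1)^{-1}$, and asserts it without proof, whereas you derive it from the mean value theorem and also treat the case $b(k+1)=0$ explicitly.

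One aside you discard is backwards. It claims concavity gives $1-t^p\ge 1-t$ for $p\le 1$, but for $0<p\le 1$ and $t\in(0,1)$ one has $t^p\ge t$, so $1-t^p\le 1-t$. Your subsequent case analysis states this correctly and does not rely on the aside.
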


\begin{proof}
For any $0 < a < b$ and $p > 0$, we have
\begin{equation*}
\min(p, 1) b^{p-1}(b-a) \le b^p - a^p \le \max(p, 1) b^{p-1}(b-a).
\end{equation*}  
By choosing $b= B(k)^{-1}$ and $a=B(k+1)^{-1}$, we obtain
\begin{equation*}
\min(p, 1) \frac{b(k+1)}{B(k)^p B(k+1)} \le \frac{1}{B(k)^p} - \frac{1}{B(k+1)^p} \le \max(p, 1) \frac{b(k+1)}{B(k)^p B(k+1)}.
\end{equation*}
By rearranging the terms, we have
\begin{align*}
\min\bigg(\frac{1}{p}, 1\bigg) \bigg( \frac{1}{B(k)^p} - \frac{1}{B(k+1)^p} \bigg) &\le \frac{b(k+1)}{B(k)^p B(k+1)} \\
&\le \max\bigg(\frac{1}{p}, 1\bigg) \bigg( \frac{1}{B(k)^p} - \frac{1}{B(k+1)^p} \bigg).
\end{align*}
Summing these estimates over $k$ from $m$ to $\infty$, we obtain the desired estimate \eqref{prl2-}.	
\end{proof}

The following lemma presents a variant of Copson's inequality (see \cite[Theorem~A and Theorem~B]{Cop}, \cite[Lemma~2, (28)]{Bennett Gross}, \cite[(11.1i) and (11.1ii)]{Grosse}). This particular formulation was stated in \cite{BGKKT} without proof, so we include its proof here for completeness.

\begin{lem} Let $1\leq p<\infty$. Assume that $\bm{b}$ is a given non-negative sequence with $b(1)>0$. Then inequality 
\begin{equation}  
\bigg(\sum_{m=1}^{\infty} 
\bigg( \sum^\infty_{k=m} x(k) \bigg)^p b(m) \bigg)^{\frac{1}{p}} \le p  2^{\frac{p-1}{p}} \bigg( \sum^\infty_{m=1} x(m)^p b({m+1})^{1-p} B(m) B({m+1})^{p-1} \bigg)^{\frac{1}{p}}
\label{eq12}
\end{equation}
holds for all non-negative sequences $\bm{x}$.
\end{lem}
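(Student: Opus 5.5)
The plan is to prove \eqref{eq12} by the classical weighted Hölder/duality-free argument for Copson's inequality, using the auxiliary weight $B$ and the discrete power rules \eqref{prl1}, \eqref{prl2} and Lemma~\eqref{prl2-}. Put $X(m)=\sum_{k=m}^\infty x(k)$ and write
\[
w(k):=b(k+1)^{1-p}B(k)B(k+1)^{p-1},
\]
so the right-hand side of \eqref{eq12} is $p2^{\frac{p-1}{p}}(\sum x(k)^p w(k))^{1/p}$. For $p=1$ the inequality is just Fubini (Lemma~\ref{Fubini}): $\sum_m X(m)b(m)=\sum_k x(k)B(k)$, which equals the right side. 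So assume $p>1$ and, by monotone convergence, that $x$ has finite support, so all sums are finite.

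The first step is to split $x(k)=\big(x(k)\,\phi(k)^{-1}\big)\phi(k)$ with a weight $\phi$ chosen from the $B$'s. I plan to take $\phi(k)$ proportional to $\big(b(k+1)/(B(k)B(k+1))\big)^{1/p'}$ times a power of $B(k)$. Then Hölder gives
\[
X(m)^p\le \Big(\sum_{k\ge m} x(k)^p\phi(k)^{-p}\,\psi(k)\Big)\Big(\sum_{k\ge m}\psi(k)^{-p'/p}\phi(k)^{p'}\Big)^{p-1}.
\]
The second factor is to be controlled by Lemma \eqref{prl2-}, which yields a bound of the form $\sum_{k\ge m} b(k+1)/(B(k)^{s}B(k+1))\le \max(1/s,1)B(m)^{-s}$ with a suitable $0<s$. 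The terms with $k$ where $b(k+1)=0$ contribute nothing because then $w(k)=\infty\cdot0$ conventions force $x(k)=0$ or infinite right side. Next, multiplying by $b(m)$ and summing in $m$, interchange sums by Lemma~\ref{Fubini}. This yields $\sum_k x(k)^p(\ldots)\sum_{m\le k} b(m)B(m)^{-\alpha}$, and the inner sum is bounded via \eqref{prl1} with exponent $1-\alpha\in(0,1)$ by $(1-\alpha)^{-1}B(k)^{1-\alpha}$. The factor $b(1)>0$ ensures \eqref{prl1} applies for exponents below $1$.

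The standard choice in Copson's proof is $s=1/p'$-type exponents, for example $\alpha=1/p$, giving constants $(\max(p',1))^{p-1}\cdot p$. The main obstacle is that the two discrete power rules produce weights $B(k)$ and $B(k+1)$ in mismatched places. So one must check that after combining, the weight in front of $x(k)^p$ is at most a constant times $w(k)=b(k+1)^{1-p}B(k)B(k+1)^{p-1}$, and that the constant is $p^p2^{p-1}$. I expect the factor $2^{p-1}$ to arise exactly here. One way it could enter is by absorbing an additional term $B(\infty)^{-s}$ when $B(\infty)<\infty$ (which only helps, being subtracted). More likely, it enters via a crude comparison like $X(m)^p\le 2^{p-1}(x(m)^p+X(m+1)^p)$ that separates the diagonal term $k=m$ from the tail $k\ge m+1$, where the Lemma \eqref{prl2-} sum is naturally indexed. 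The diagonal piece $\sum_m x(m)^p b(m)$ is trivially dominated by $\sum_m x(m)^p w(m)$, since $w(m)\ge b(m+1)^{1-p}B(m+1)^{p}\cdot B(m)/B(m+1)$; this needs care, and I would verify it using $B(m)\ge b(m)$ and $B(m+1)\ge b(m+1)$. I would first try the tail with Hölder exponents tuned so that Lemma \eqref{prl2-} is applied with $p$ replaced by $1/p$, giving constant $\max(p,1)=p$ and hence total constant $p^p2^{p-1}$ after the $p$-th root.
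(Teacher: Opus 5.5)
Your route (Hölder with an auxiliary weight, then \eqref{prl2-}, Fubini, \eqref{prl1}) is genuinely different from the paper's, and it works. However, the obstacle you anticipate does not exist, and the remedy you sketch for it would miss the stated constant.

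\textbf{How to make your route work.} Take $\psi\equiv 1$ and $h=\phi^{-1}$ with
\[
h(k)^{-p'}=\frac{b(k+1)}{B(k)^{1/p}B(k+1)},\qquad\text{i.e.}\qquad h(k)^p=w(k)B(k)^{-1/p}.
\]
This is exactly your $\phi$, with the power of $B(k)$ taken to be $B(k)^{(p-1)^2/p^2}$. The argument then runs as follows.
\begin{itemize}
\item Lemma \eqref{prl2-} with exponent $1/p$ gives $\sum_{k\ge m}h(k)^{-p'}\le pB(m)^{-1/p}$. Hence $X(m)^p\le p^{p-1}B(m)^{\frac1p-1}\sum_{k\ge m}x(k)^p h(k)^p$.
\item After Fubini, \eqref{prl1} with exponent $1/p$ gives $\sum_{m\le k}b(m)B(m)^{\frac1p-1}\le pB(k)^{1/p}$. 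This is where $b(1)>0$ is used.
\item The factor $B(k)^{1/p}$ cancels exactly against $h(k)^p=w(k)B(k)^{-1/p}$.
\end{itemize}
Therefore
\[
\sum_{m}b(m)X(m)^p\le p^p\sum_k x(k)^p w(k).
\]
This is \eqref{eq12} with constant $p\le p2^{\frac{p-1}{p}}$. No $B(k)$ versus $B(k+1)$ mismatch arises, and no factor $2$ is needed.

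\textbf{What to avoid.} Splitting first, $X(m)^p\le 2^{p-1}\bigl(x(m)^p+X(m+1)^p\bigr)$, makes the diagonal term cost an extra $2^{p-1}$. Along the lines you describe this gives $2^{p-1}(1+p^p)$, which is strictly larger than $2^{p-1}p^p$, so that version does not prove the lemma as stated.

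The two exponents are also tied by $\alpha=s(p-1)$. Your example $\alpha=1/p$ forces $s=1/(p(p-1))$ and yields the constant $\max(p(p-1),1)^{p-1}p'$. That exceeds $p^p2^{p-1}$ both as $p\to1^+$ and for $p\ge4$. The choice $s=1/p$ (i.e. $\alpha=1/p'$) is the one that works, and it is optimal within this one-parameter family.

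\textbf{Comparison with the paper.} The paper proceeds in these steps:
\begin{itemize}
\item It applies the power rule \eqref{prl2} to get $X^p\le p\sum x X^{p-1}$, then uses Fubini.
\item It applies Hölder so that a weighted copy of the left-hand side reappears.
\item It bounds that copy by $2\sum_k b(k)X(k)^p$, using $\sum_{k\le n}B(k)b(k+1)/B(k+1)\le 2B(n)$ and Lemma~\ref{lemma_03} (with exponent $1$, applied to the non-increasing sequence $X^p$).
\item It divides this copy out, which requires truncating all sums at $N$ so that the left side is finite.
\end{itemize}
The factor $2^{\frac{p-1}{p}}$ comes solely from this absorption step. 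Your test-weight argument needs neither the absorption and division (hence no truncation) nor Lemma~\ref{lemma_03}, and it gives the better constant $p$. The paper's argument, in turn, has the advantage that no auxiliary weight has to be guessed.
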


\begin{proof}
For $p=1$, the result is trivial, as \eqref{eq12} reduces to an identity owing to Lemma~\ref{Fubini}. 
	
We now assume that $p>1$. Let $N\in \mathbb{N}$ be fixed. Applying \eqref{prl2} together with \eqref{eqfubini} yields
\begin{align*}
\sum_{m=1}^{N} \bigg(\sum_{k=m}^{N} x(k) \bigg)^p b(m) &\le p\sum_{m=1}^{N} \bigg(\sum^N_{k=m} x(k) \bigg(\sum^N_{i=k} x(i) \bigg)^{p-1}\bigg)  b(m)\\
& = p\sum_{k=1}^{N} x(k) \bigg(\sum^N_{i=k} x(i) \bigg)^{p-1} B(k) \\
& = p\sum_{k=1}^{N} x(k) B(k)^{\frac{1}{p}} B(k+1)^{\frac{p-1}{p}}b(k+1)^{\frac{1-p}{p}} \\
&\hskip+1cm\times B(k)^{1-\frac{1}{p}} B(k+1)^{\frac{1-p}{p}}b(k+1)^{\frac{p-1}{p}}\bigg(\sum^N_{i=k} x(i) \bigg)^{p-1}.
\end{align*}
Since $p>1$, H\"{o}lder's inequality gives
\begin{align}
\sum_{m=1}^{N} \bigg(\sum^N_{k=m} x(k) \bigg)^p b(m) 
&\le p\bigg(\sum_{k=1}^{N}
x(k)^p B(k) B(k+1)^{p-1}b(k+1)^{1-p}\bigg)^{\frac{1}{p}} \notag \\
&\hskip+1cm\times \bigg(\sum_{k=1}^{N}B(k) B(k+1)^{-1}b(k+1)\bigg(\sum^N_{i=k} x(i) \bigg)^{p}\bigg)^{1-\frac{1}{p}}. \label{mid-copson variant}
\end{align} 
Observe that, for each $n \in \mathbb{N}$, 
\begin{align*} 
\sum_{k=1}^{n} B(k) B(k+1)^{-1} b(k+1) &= \sum_{k=1}^{n-1} B(k) B(k+1)^{-1} b(k+1)+ B(n) B(n+1)^{-1}b(n+1)\\
&\le \sum_{k=1}^{n-1}b(k+1)+ B(n) \\
&\le 2B(n).
\end{align*} 
Thus, by Lemma~\ref{lemma_03}, we obtain 
\begin{align*} 
\sum_{k=1}^{N}B(k) B(k+1)^{-1}b(k+1) \bigg(\sum_{i=k}^N x(i) \bigg)^{p} &\le 2 \sum_{k=1}^N b(k)\bigg(\sum_{i=k}^N x(i) \bigg)^{p}.
\end{align*} 
By substituting this estimate into \eqref{mid-copson variant}, we arrive at
\begin{align} \sum_{m=1}^{N}
\bigg(\sum^N_{k=m} x(k) \bigg)^p b(m)&\le p 2^{1-\frac{1}{p}} \bigg(\sum_{k=1}^N x(k)^p B(k) B(k+1)^{p-1}b(k+1)^{1-p}\bigg)^{\frac{1}{p}}\notag\\
&\hskip+1cm \times \bigg( \sum_{k=1}^N b(k) \bigg( \sum^N_{i=k} x(i) \bigg)^{p} \bigg)^{1-\frac{1}{p}}. \label{approximate}
\end{align} 
After dividing both sides by the common factor, we get
\begin{align*} 
\bigg(\sum_{m=1}^{N}
\bigg(\sum^N_{k=m} x(k) \bigg)^p b(m)\bigg)^{\frac{1}{p}}&\le p 2^{1-\frac{1}{p}} \bigg(\sum_{k=1}^N x(k)^p B(k) B(k+1)^{p-1}b(k+1)^{1-p}\bigg)^{\frac{1}{p}}.
\end{align*} 
Thus, letting $N \to \infty$, \eqref{eq12} follows.
\end{proof}

The following results from \cite{GogKrep} are essential for proving our main theorems.

\begin{thm} \cite[Theorem~3.7]{GogKrep}\label{T:ekv-antigop}
Let	$0<r<\infty$. Assume that $\bm{v}$ and $\bm{w}$ are given non-negative sequences. Define
	\begin{align*}
		V_1 & := \sup_{\bm{a} \in  \RpN} \Bigg(\sum_{n=1}^{\infty}  w(n) \Big[ \sup_{n\le j <\infty} u(j)   \sup_{j\le i < \infty} a(i) \Big]^r \Bigg)^ {\frac{1}{r}} \Bigg( \sum_{n=1}^{\infty} v(n) a(n) \Bigg)^{-1},
        \\
		V_2 & := \sup_{\bm{a} \in  \RpN} \Bigg( \sum_{n=1}^{\infty} w(n) \Big[ \sup_{n \le j < \infty} u(j) \sum_{i=j}^{\infty} a(i) \Big]^r \Bigg)^{\frac{1}{r}} \Bigg( \sum_{n=1}^{\infty} v(n) a(n) \Bigg)^{-1}. 
	\end{align*}
	Then $V_1 \approx V_2$, where the multiplicative constants in the equivalence depend only on $r$.	
\end{thm}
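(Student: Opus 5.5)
The inequality $V_1\le V_2$ is immediate. For every $\bm a\in\RpN$ and every $j$ we have $\sup_{j\le i<\infty}a(i)\le\sum_{i=j}^\infty a(i)$, so the left-hand numerator in $V_1$ is dominated by the one in $V_2$ while the denominators coincide. All the work is in proving $V_2\lesssim V_1$. The first step is to record a lower bound for $V_1$ by testing with $\bm a=\delta_k$ (the sequence equal to $1$ at $k$ and $0$ elsewhere). Then $\sup_{j\le i}a(i)=\chi_{\{j\le k\}}$, and we obtain
\[
V_1\ \ge\ \sup_{k\in\mathbb N} v(k)^{-1}\Bigg(\sum_{n=1}^{k} w(n)\Big[\sup_{n\le j\le k}u(j)\Big]^r\Bigg)^{\frac1r}=:V_0 .
\]
Testing $V_2$ with the same $\delta_k$ gives exactly the same quantity, since $\sum_{i\ge j}\delta_k(i)=\chi_{\{j\le k\}}$.

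\emph{Case $r\ge1$.} The map $\bm a\mapsto \big\{\sup_{n\le j}u(j)\sum_{i\ge j}a(i)\big\}_n$ is positively homogeneous and subadditive, and $\|\cdot\|_{\ell^r(w)}$ is a norm. Hence the numerator of $V_2$ is a sublinear functional $N(\bm a)$, while the denominator $\sum v(n)a(n)$ is linear. Writing $\bm a=\sum_k a(k)\delta_k$ and using subadditivity (together with monotone convergence for the infinite sum) gives $N(\bm a)\le\sum_k a(k)N(\delta_k)\le V_0\sum_k v(k)a(k)$. Thus $V_2\le V_0\le V_1$.

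\emph{Case $0<r<1$.} Here subadditivity fails, and I would use a blocking argument on the tail $\Phi(j):=\sum_{i\ge j}a(i)$, assuming $0<\Phi(1)<\infty$ (otherwise the claim is trivial or follows by truncation). Define $m_0=1$ and $m_{k+1}=\min\{m:\Phi(m)\le 2^{-1}\Phi(m_k)\}$; this gives finitely many blocks if $\bm a$ is finitely supported, which we may assume by monotone convergence. On the block $[m_k,m_{k+1})$ we have $\Phi(j)\le\Phi(m_k)\le 2\sum_{i=m_k}^{m_{k+1}-1}a(i)$. Moreover the tails decrease geometrically along the blocks, so
\[
\Big[\sup_{n\le j}u(j)\Phi(j)\Big]^r\le\sup_k 2^{r}\Big[\sup_{j\ge\max(n,m_k)}u(j)\Big]^r\Big(\sum_{i\in\text{block }k}a(i)\Big)^r ,
\]
and the supremum over $k$ can be replaced by a sum. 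The goal is then to replace the block mass by a single value $\tilde a$ of a new sequence, placed so that two things hold: $\sup_{i\ge j}\tilde a(i)$ dominates the block mass for all $j$ in the block, and $\sum v\tilde a\lesssim\sum va$. After that, $V_1$ applied to $\tilde a$ closes the argument, with the geometric decay giving constants depending only on $r$.

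The main obstacle is this placement. To make the supremum of $\tilde a$ see the whole block, the mass must sit at the right endpoint $m_{k+1}-1$. But $v$ there may be much larger than $v$ where $a$ actually lives, which breaks the denominator estimate. What I would try first is to split each block at the point $i_k$ where $v$ is minimal among the points carrying at least half of the block mass, and put the mass of $\tilde a$ at $i_k$. Indices $j\le i_k$ are then controlled by $\sup_{i\ge j}\tilde a(i)$. The remaining indices $j>i_k$ carry at most half the mass, and I would handle them by iterating the splitting inside the block, which again decays geometrically.

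If this becomes unwieldy, the fallback is to characterize both $V_1$ and $V_2$ for $r<1$ by the same explicit discretized condition, namely a level-set/duality formula of the type $\big(\sum_k(\ldots)^{r/(1-r)}\ldots\big)^{(1-r)/r}$ built from $\inf_{i\ge j}v(i)$ and $w$, and compare the two conditions directly.
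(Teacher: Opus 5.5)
The paper gives no proof of this statement; it is quoted from \cite[Theorem~3.7]{GogKrep}, so your argument has to stand on its own. Your bound $V_1\le V_2$ and your treatment of $r\ge1$ (sublinearity plus the test sequences $\delta_k$, giving even $V_1=V_2=V_0$) are correct. The case $0<r<1$, however, is \emph{not} proved. You identify the obstacle correctly: placing the block mass at $m_{k+1}-1$ may be expensive for $v$. But you then offer only a tentative splitting scheme and a fallback characterization, and neither is carried out.

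Two of the steps you do write down would also fail. First, the displayed bound with $\sup_{j\ge\max(n,m_k)}u(j)$ discards the localization $j<m_{k+1}$ and cannot be closed by $V_1$. For unbounded $u$ its right-hand side is infinite, even when $\bm a$ is finitely supported and the left-hand side is finite. Second, replacing $\sup_k$ by $\sum_k$ in $\ell^r(w)$ with $r<1$ would require the $v$-costs of the blocks to decay geometrically. Only the unweighted tails $\Phi(m_k)$ do.

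The missing idea is to make $v$ monotone first. Write $V_i(v)$ to stress the dependence on $v$, and set $\bar v(j):=\inf_{i\ge j}v(i)$. This sequence is non-decreasing and satisfies $\bar v\le v$, so $V_2(v)\le V_2(\bar v)$ trivially. Conversely, $V_1(\bar v)\le 2V_1(v)$. To see this, move each $a(i)$ to a point $m(i)\ge i$ with $v(m(i))\le 2\bar v(i)$ (up to an arbitrarily small error if $\bar v(i)=0$); shifting mass to the right cannot decrease $\sup_{i\ge j}a(i)$.

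For the non-decreasing weight $\bar v$ and finitely supported $\bm a$, your own blocking then closes the argument with the single sequence $a^\sharp:=\sum_k\Phi(m_k)\delta_{m_{k+1}-1}$. It clearly satisfies $\sup_{i\ge j}a^\sharp(i)\ge\Phi(j)$ for every $j$. For its cost, use $\Phi(m_k)\le 2(\Phi(m_k)-\Phi(m_{k+1}))$ and, with $\bar v(0):=0$, the identity
\[
\sum_{i=m_k}^{m_{k+1}-1}a(i)\big(\bar v(m_{k+1}-1)-\bar v(i)\big)=\sum_{l=m_k+1}^{m_{k+1}-1}\big(\bar v(l)-\bar v(l-1)\big)\big(\Phi(m_k)-\Phi(l)\big).
\]
Here $\Phi(m_k)-\Phi(l)<\Phi(l)$ for $m_k\le l<m_{k+1}$. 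Summing over $k$ and using $\sum_l(\bar v(l)-\bar v(l-1))\Phi(l)=\sum_i\bar v(i)a(i)$ gives $\sum_k\Phi(m_k)\,\bar v(m_{k+1}-1)\le 4\sum_i\bar v(i)a(i)$.

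Hence $V_2(v)\le 4V_1(\bar v)\le 8V_1(v)$ for every $0<r<\infty$. Since $\Phi(j)\le\sup_{i\ge j}a^\sharp(i)$ holds pointwise, neither the $r$-triangle inequality nor a case distinction in $r$ is needed.
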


\begin{thm}\cite[Theorem~3.8]{GogKrep}\label{T:ekv-gop}
	Let	$0< r < \infty$. $\bm{v}$ and $\bm{w}$ are given non-negative sequences. Define
	\begin{align*}
		S_1 & := \sup_{\bm{a} \in  \RpN}  \Bigg(\sum_{n=1}^{\infty}  w(n) \Big[ \sup_{n \le j < \infty} u(j)   \sup_{1 \le i \le j} a(i) \Big]^r \Bigg)^ {\frac{1}{r}}  \Bigg( \sum_{n=1}^{\infty} v(n) a(n) \Bigg)^{-1}, 
        \\
		S_2 &= \sup_{\bm{a} \in  \RpN}  \Bigg(\sum_{n=1}^{\infty}  w(n) \Big[ \sup_{n \le j < \infty} u(j) \sum_{i=1}^j a(i) \Big]^r \Bigg)^{\frac{1}{r}} \Bigg( \sum_{n=1}^{\infty} v(n) a(n) \Bigg)^{-1}. 
        \end{align*}
	Then $S_1 \approx S_2$, where the multiplicative constants in the equivalence depend only on $r$.	
\end{thm}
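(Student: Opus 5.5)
We first note that the inequality $S_1\le S_2$ is immediate. For every $\bm a\in\RpN$ and every $j$ we have $\sup_{1\le i\le j}a(i)\le\sum_{i=1}^j a(i)$, and the left-hand functional in $S_1$ is monotone in this inner quantity. So all the work is in proving $S_2\lesssim S_1$.

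For this we test $S_1$ on the unit sequences $\bm a=\delta_k$, meaning $\delta_k(i)=1$ if $i=k$ and $0$ otherwise. For $\delta_k$ the two inner expressions coincide:
\[
\sup_{1\le i\le j}\delta_k(i)=\sum_{i=1}^j\delta_k(i)=\chi_{\{j\ge k\}}.
\]
Hence
\[
S_1\ \ge\ \sup_{k}\, v(k)^{-1}\Bigg(\sum_{n=1}^\infty w(n)\Big[\sup_{j\ge \max(n,k)}u(j)\Big]^r\Bigg)^{\frac1r}=:\sup_k \frac{F(\delta_k)}{v(k)},
\]
where $F(\bm a)$ denotes the numerator in $S_2$. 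The goal is then to show $F(\bm a)\lesssim \sum_k v(k)a(k)\,F(\delta_k)/v(k)$, i.e. that $F$ is controlled by its values on the extreme rays of the cone.

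\emph{Case $r\ge1$.} The map $\bm a\mapsto \sup_{j\ge n}u(j)\sum_{i\le j}a(i)$ is sublinear, and the $\ell^r(w)$ quasi-norm is a norm. Therefore $F$ is sublinear and positively homogeneous. Writing $\bm a=\sum_k a(k)\delta_k$ (first for finitely supported $\bm a$, then passing to the limit by monotone convergence) gives $F(\bm a)\le\sum_k a(k)F(\delta_k)\le S_1\sum_k v(k)a(k)$. So $S_2\le S_1$ in this case.

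\emph{Case $0<r<1$.} This is the main obstacle, since $F$ is no longer subadditive. The plan is a dyadic discretization of the partial sums $A(j)=\sum_{i\le j}a(i)$.
\begin{itemize}
\item Choose indices $k_0<k_1<\dots$ with $k_s=\min\{j: A(j)\ge 2^{s}\}$ (the index set may be finite). Then on $[k_s,k_{s+1})$ we have $A(j)\le 2^{s+1}\le 2A(k_s)$.
\item Let $\Delta_s$ be the mass of $\bm a$ on $(k_{s-1},k_s]$, so that $\Delta_s\ge 2^{s-1}\gtrsim A(k_s)$ up to a fixed factor. This gives $A(j)\lesssim\sup_{s:\,k_s\le j}\Delta_s$.
\item The resulting supremum is of $S_1$ type, except that the mass $\Delta_s$ sits on a block rather than at a single point. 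To handle this, replace $\bm a$ on each block by a point mass placed at the index of the block where $v$ is minimal, i.e. use $\min_{\text{block}}v\cdot\Delta_s\le\sum_{\text{block}}v\,a$.
\end{itemize}
The delicate point is that $\sup_{j\ge n}u(j)$ must be taken over $j\ge k_s$ while the moved mass may sit earlier in the block. I would handle this by splitting each block sum: $A(j)$ for $j$ inside a block is at most $2A(k_{s-1})$ plus the partial mass inside the block. The first term is a point-mass term already controlled. The second term is dominated by the $S_1$ functional applied to $\bm a$ restricted to that block, since inside one block the partial sum is at most twice the maximal single-block contribution, after a further dyadic splitting of that block.

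Finally, we sum over $s$. Because the $\Delta_s$ grow geometrically, the $r$-th powers satisfy $\big(\sup_s c_s\big)^r\le\sum_s c_s^r$. Combining this with $\sum_s \big(\min_{\text{block}}v\,\Delta_s\big)\le\sum v a$ is expected to give $F(\bm a)\lesssim_r S_1\sum v a$. Should the block-relocation step fail, the fallback is to characterize both $S_1$ and $S_2$ explicitly by the same discretized condition on $u,v,w$, and compare them through that common condition.
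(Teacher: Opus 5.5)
Your bound $S_1\le S_2$ and your treatment of $r\ge 1$ are correct; there in fact $S_2=S_1=\sup_k F(\delta_k)/v(k)$. The case $0<r<1$, however, is not proved. The paper gives no proof of this statement and quotes it from \cite{GogKrep}, so your argument has to stand on its own.

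The decisive flaw is the final step. Summing block contributions with $(\sup_s c_s)^r\le\sum_s c_s^r$ uses $S_1$ only on point masses. You would then need $\sum_s c_s^r\lesssim(\sum_s c_s)^r$ for $c_s=v(m_s)\Delta_s$, which is false for $r<1$. The geometric growth of $\Delta_s$ does not pass to $c_s$, because $v(m_s)$ may decay at the same rate. No variant of this can work, since for $r<1$ point masses do not control $S_2$. Take $w\equiv 1$, $v(n)=n^{-\alpha}$, $u(n)=n^{-1/r-\alpha}$ with $\alpha>0$. Then $F(\delta_k)\lesssim v(k)$ uniformly in $k$. Yet $a(n)=n^{\alpha-1}$ for $n\le N$ (and $0$ otherwise) gives $F(\bm a)\big/\sum_n v(n)a(n)\gtrsim(\log N)^{\frac1r-1}\to\infty$, and here $c_s$ is essentially constant in $s$.

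The fix uses your own relocation idea, but applies the definition of $S_1$ \emph{once}, to a single sequence. Let $s$ range over $\mathbb{Z}$. Let $K_1<K_2<\cdots$ be the distinct values of $k_s=\min\{j: A(j)\ge 2^s\}$; the $k_s$ need not be strictly increasing in $s$, since one $a(i)$ may cross several levels. Put $K_0=0$, $I_t=(K_{t-1},K_t]$ and $\Delta_t=\sum_{i\in I_t}a(i)$. Choose $m_t\in I_t$ minimizing $v$ on $I_t$, and set $\bm b=\sum_t\Delta_t\delta_{m_t}$, so that $\sum_k v(k)b(k)\le\sum_k v(k)a(k)$.

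Your claim $\Delta_s\ge 2^{s-1}\gtrsim A(k_s)$ is false: for $\bm a=(1.9,0.2,0,\dots)$ one has $k_0=1$, $k_1=2$ and $\Delta_1=0.2<2^0$. Two consecutive blocks suffice, though. If $s$ is the largest level with $k_s=K_t$, then $\Delta_{t-1}+\Delta_t\ge 2^{s-1}$ (with $\Delta_0=0$). Hence $A(j)<2^{s+1}\le 8\max(\Delta_{t-1},\Delta_t)\le 8\sup_{i\le j}b(i)$ for $K_t\le j<K_{t+1}$.

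The ``delicate point'' you raise does not exist. The inner supremum in $S_1$ runs over $i\le j$, so moving block mass to an earlier point only increases $\sup_{i\le j}b(i)$. Your recursive ``further dyadic splitting'' is therefore unnecessary, and as stated it is circular.

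Monotonicity of the left-hand functional now gives $F(\bm a)\le 8F_1(\bm b)\le 8S_1\sum_k v(k)a(k)$, where $F_1$ is the numerator in $S_1$. Thus $S_2\le 8S_1$ for every $r>0$, and the case distinction in $r$ is superfluous.
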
	

\begin{thm}\cite[Theorem~2.1]{GogKrep} \label{T:main-discrete-p-big}
Let $0<p ,q <\infty$. Assume that $\bm{u}, \bm{v}$ and $\bm{w}$ are given non-negative sequences, and let $\bm{z}$ be defined by \eqref{def:z(m)}. Then, inequality 
\begin{equation}\label{E:d-gop}
\Bigg(\sum_{n=1}^{\infty} \Bigg(\sup_{n\le i < \infty} u(i) \sum_{k=1}^i x(k) \Bigg)^{q} w(n)\Bigg)^{\frac 1q} \le \mathcal{C}	\Bigg(\sum_{n=1}^{\infty} x(n)^p v(n) \Bigg)^{\frac 1p}
\end{equation}
holds for all non-negative sequences $\bm{x}$ if and only if one of the following conditions is satisfied:
	
{\textup{(i)}} $1 < p\le q$ and $\max\{\mathcal{C}_1, \mathcal{C}_2\} <\infty$, where
\begin{align*}
\mathcal{C}_1 &:= \sup_{1 \le n < \infty} z(n)\Bigg(\sum_{i=1}^{n} w(i)  \Bigg)^{\frac{1}{q}} \Bigg(\sum_{k=1}^{n} v(k)^{\frac{1}{1-p}} \Bigg)^{\frac{p-1}{p}},\\
\mathcal{C}_2 &:= \sup_{1 \le n < \infty} \Bigg( \sum_{i=n}^{\infty} z(i)^{q} w(i) \Bigg)^{\frac{1}{q}} \Bigg(\sum_{k=1}^{n} v(k)^{\frac{1}{1-p}} \Bigg)^{\frac{p-1}{p}}.
\end{align*}

{\textup{(ii)}} $1<p$, $q<p$ and $\max\{\mathcal{C}_3, \mathcal{C}_4\} < \infty$, where 
\begin{align*}
\mathcal{C}_3 &:= \Bigg( \sum_{n=1}^{\infty} \Bigg(\sum_{i=n}^{\infty}  z(i)^q w(i) \Bigg)^{\frac{q}{p-q}} z(n)^{q} w(n) \Bigg(\sum_{k=1}^{n} v(k)^{\frac{1}{1-p}} \Bigg)^{\frac{(p-1)q}{p-q}} \Bigg)^\frac{p-q}{pq},\\
\mathcal{C}_4	& :=  \Bigg(\sum_{n=1}^{\infty} \Bigg(\sum_{i=1}^{n} w(i) \Bigg)^{\frac{q}{p-q}} w(n) \sup_{n\le k <\infty} z(k)^{\frac{pq}{p-q}} \Bigg(\sum_{j=1}^{k} v(j)^{\frac{1}{1-p}} \Bigg)^{\frac{(p-1)q}{p-q}} \Bigg)^\frac{p-q}{pq}.
\end{align*}
	
{\textup{(iii)}} $0<p\le 1$, $p\le q$ and $\max\{\mathcal{C}_5, \mathcal{C}_6\} < \infty$, where
\begin{align*}
\mathcal{C}_5 & := \sup_{1\le n < \infty}  z(n)\Bigg( \sum_{i=1}^{n} w(i) \Bigg)^{\frac 1q} \sup_{1\le j\le n} v(j)^{-\frac{1}{p}},\\
\mathcal{C}_6 &:= \sup_{1\le n < \infty} \Bigg( \sum_{k=n}^{\infty} z(k)^q w(k) \Bigg)^{\frac 1q} \sup_{1\le j\le n} v(j)^{-\frac{1}{p}}.
\end{align*}
	
{\textup{(iv)}} $0<q<p\le 1$ and $\max\{\mathcal{C}_7, \mathcal{C}_8\}< \infty$, where
\begin{align*}
\mathcal{C}_7 &:=\Bigg( \sum_{n=1}^{\infty} \Bigg(\sum_{i=n}^{\infty} z(i)^q w(i) \Bigg)^{\frac{q}{p-q}} z(n)^{q} w(n) \sup_{1\le k\le n} v(k)^{-\frac{q}{p-q}} \Bigg)^{\frac{p-q}{pq}},\\
\mathcal{C}_8 & :=\Bigg(\sum_{n=1}^{\infty} \Bigg(\sum_{i=1}^{n} w(i) \Bigg)^{\frac{q}{p-q}} w(n) \sup_{n \le k< \infty} z(k)^{\frac{pq}{p-q}} v(k)^{-\frac{q}{p-q}} \Bigg)^{\frac{p-q}{pq}}.
\end{align*}
Moreover, if $\mathcal{C}$ denotes the best constant in inequality \eqref{E:d-gop}, then
\begin{equation*}
\mathcal{C} \approx
\begin{cases}
\mathcal{C}_1+\mathcal{C}_2 &\text{in the case \textup{(i)},} \\
\mathcal{C}_3+ \mathcal{C}_4 &\text{in the case \textup{(ii)},} \\
\mathcal{C}_5+ \mathcal{C}_6  &\text{in the case \textup{(iii)},} \\
\mathcal{C}_7+ \mathcal{C}_8  &\text{in the case \textup{(iv)},}
\end{cases}
\end{equation*}
where the multiplicative constants in all the equivalences above depend only on $p$ and $q$.
\end{thm}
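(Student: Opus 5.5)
The plan is to prove Theorem~\ref{T:main-discrete-p-big} by removing the supremum, which splits \eqref{E:d-gop} into a weighted Hardy inequality plus a ``level'' inequality controlled by $\bm{z}$. First replace $\bm{u}$ by its decreasing envelope $\bm{z}$. Write $X(i)=\sum_{k\le i}x(k)$. Since $X$ is non-decreasing, $z(i)X(i)=\sup_{k\ge i}u(k)X(i)\le \sup_{k\ge i}u(k)X(k)$. Together with $u\le z$ this gives
\[
\sup_{n\le i<\infty}u(i)X(i)=\sup_{n\le i<\infty}z(i)X(i), \qquad n\in\mathbb N .
\]
So we may assume $u=z$ is non-increasing, which is why only $z$ enters $\mathcal C_1,\dots,\mathcal C_8$. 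Next, split
\[
\sup_{i\ge n}z(i)X(i)\le z(n)X(n)+\sup_{i\ge n}z(i)\big(X(i)-X(n)\big).
\]
Conversely, the left side dominates each of the two terms on the right, up to a factor depending only on $p,q$.

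\textbf{The Hardy term.} The first term gives the inequality $\big(\sum_n (z(n)X(n))^q w(n)\big)^{1/q}\lesssim (\sum x^p v)^{1/p}$ on non-negative sequences. Its characterization is classical: Bennett for $1<p$, and the $\ell^p$ ($p\le1$) versions via Lemma~\ref{lemma_03}. It yields $\mathcal C_2$ for $p\le q$, $p>1$ (the Muckenhoupt-type condition), $\mathcal C_3$ for $q<p$, $p>1$ (the Mazya--Rozin-type condition), and $\mathcal C_6$, $\mathcal C_7$ in the $p\le1$ cases, where $\big(\sum_{k\le n}v^{1/(1-p)}\big)^{(p-1)/p}$ is replaced by $\sup_{k\le n}v(k)^{-1/p}$.

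\textbf{The remaining term.} For the second term I would discretize $\bm{z}$. Choose indices $n_k$ with $z(n_k)\approx 2^{-k}$, so that $z$ is comparable to a constant on each block $[n_k,n_{k+1})$. On such a block,
\[
\sup_{i\ge n}z(i)\big(X(i)-X(n)\big)\approx \sup_{j\ge k}2^{-j}\sum_{i<n_{j+1}}x(i),
\]
and the contribution of the block is at most $W(n_{k+1})\sup_{j\ge k}2^{-j}(\dots)$, where $W(n)=\sum_{i\le n}w(i)$. For $p\le q$, apply H\"older (for $p>1$), or the $\ell^p$-subadditivity (for $p\le1$), to each $\sum_{i<n_{j+1}}x(i)$. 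Then sum over blocks using $\ell^p\subset\ell^q$. This gives sufficiency with $\mathcal C_1$ (respectively $\mathcal C_5$). Necessity is obtained by testing with extremal sequences: $x=v^{1/(1-p)}\chi_{[1,n]}$ for $p>1$, and $x=\delta_j$ with $j$ the maximizer of $v(j)^{-1/p}$ for $p\le1$. For these, the left side is at least $z(n)W(n)^{1/q}X(n)$.

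\textbf{Main obstacle.} I expect the hardest part to be the case $q<p$, namely $\mathcal C_4$ and $\mathcal C_8$, because of the inner $\sup_{k\ge n}z(k)^{pq/(p-q)}(\dots)$. There I would work with the discretized sequence $\{n_k\}$ and write the left side as
\[
\sum_k \big(W(n_{k+1})-W(n_k)\big)\sup_{j\ge k}\big(2^{-j}X(n_{j+1})\big)^q .
\]
I would then use H\"older with exponents $p/q$ and $p/(p-q)$ blockwise, and a discrete ``antidiscretization'' lemma converting $\sup_{j\ge k}$ back into $\sup_{k\le j}$ over indices. The equivalences of Theorems~\ref{T:ekv-antigop} and \ref{T:ekv-gop} are the natural tools for passing between supremum and sum forms. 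Necessity here requires building a test sequence $x$ from the blocks, by a duality argument for the $\ell^{p/q}$-norm, and checking that the resulting left side reproduces $\mathcal C_4$ (respectively $\mathcal C_8$) up to constants. This construction is where I expect most of the technical work.
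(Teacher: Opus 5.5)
The paper gives no proof of this theorem; it quotes it from \cite{GogKrep}. Your outline therefore has to stand on its own, and it has two genuine gaps.

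\textbf{What is fine.} The identity $\sup_{i\ge n}u(i)X(i)=\sup_{i\ge n}z(i)X(i)$ is correct, and so is the two-sided splitting. Reading off $\mathcal{C}_2,\mathcal{C}_3,\mathcal{C}_6,\mathcal{C}_7$ as the Hardy constants for the weight $z^q w$ is also correct, although Lemma~\ref{lemma_03} concerns non-increasing sequences and is not the tool for $p\le 1$. The test sequences give necessity of $\mathcal{C}_1$ and $\mathcal{C}_5$.

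\textbf{First gap: sufficiency of the remaining term for $p\le q$.}
\begin{itemize}
\item Replacing $\sup_{i\ge n}z(i)(X(i)-X(n))$ by $\sup_{j\ge k}2^{-j}X(n_{j+1}-1)$ is only an upper bound, not an equivalence. It discards the subtraction, so you are back to the full operator and the splitting has bought nothing.
\item Block $k$ must be weighted by $\sum_{n_k\le i<n_{k+1}}w(i)$, not by $W(n_{k+1})$; otherwise the sum over $k$ overcounts.
\item Most importantly, H\"older applied to the nested partial sums $X(n_{j+1}-1)$ produces the norms $\|\bm{x}\chi_{[1,n_{j+1})}\|_{\ell^p(v)}$. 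These are not disjoint pieces of $\bm{x}$, so $\ell^p\subset\ell^q$ has nothing to act on.
\end{itemize}
If you disjointify via $\sup_{j\ge k}2^{-j}X(n_{j+1}-1)\le 2^{-k}X(n_{k+1}-1)+\sum_{l>k}2^{-l}\sum_{n_l\le i<n_{l+1}}x(i)$, the last sum is genuinely uncontrolled. Take $w=\delta_1$, $z(i)=2^{-i}$, and $v$ with $\sum_{k\le i}v(k)^{1/(1-p)}=2^{ip/(p-1)}$. Then $\mathcal{C}_1=\mathcal{C}_2=1$. Put $x(l)=2^l a(l)$. This gives $\|\bm{x}\|_{\ell^p(v)}\approx\|\bm{a}\|_{\ell^p}$ and $\sum_l 2^{-l}x(l)=\sum_l a(l)$, while the left side of \eqref{E:d-gop} is at most $2\sup_l a(l)$. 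The supremum in $j$ must be kept, and your outline has no mechanism for summing $\sum_k\big(\sum_{n_k\le i<n_{k+1}}w(i)\big)\sup_{j\ge k}(\cdots)^q$.

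\textbf{Second gap: the case $q<p$.} For cases (ii) and (iv) there is no argument. Theorems~\ref{T:ekv-antigop} and \ref{T:ekv-gop} compare an inner supremum with an inner sum against an $\ell^1(v)$ norm. They become usable for $p\le 1$ only after substituting $x^p$, which you do not do, and for $p>1$ they say nothing about $\mathcal{C}_4$. Neither sufficiency nor necessity of $\mathcal{C}_4$ or $\mathcal{C}_8$ is established.

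\textbf{The idea that closes both gaps.} Discretize along $\bm{x}$ rather than along $\bm{z}$. By monotone convergence it suffices to take $\bm{x}$ finitely supported. Then $F(n)=\sup_{i\ge n}z(i)X(i)$ is constant on $(n_{j-1},n_j]$, where $n_1<n_2<\cdots$ are the indices with $F(n)=z(n)X(n)$. Hence
\[
\sum_{n} w(n)F(n)^q=\sum_{j}\big(W(n_j)-W(n_{j-1})\big)\,z(n_j)^q X(n_j)^q,\qquad W(n_0):=0 .
\]
This is a plain Hardy sum with weight concentrated on $\{n_j\}$. Conversely, the right-hand side is at most the left-hand side for every increasing sequence $\{n_j\}$. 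So $\mathcal{C}$ is equivalent to the supremum, over all such sequences, of the classical Hardy constants of these point weights.
\begin{itemize}
\item For $p\le q$: split the tail sum of point weights from $j$ on into its $j$-th term plus $\sum_{i>n_j}z(i)^q w(i)$. This bounds those constants by $\mathcal{C}_1+\mathcal{C}_2$ (resp.\ $\mathcal{C}_5+\mathcal{C}_6$) at once.
\item For $q<p$: the same split, together with a dyadic grouping of the indices $j$ by the size of $\sum_{i>n_j}z(i)^q w(i)$, gives $\mathcal{C}_3+\mathcal{C}_4$ (resp.\ $\mathcal{C}_7+\mathcal{C}_8$).
\item Necessity of $\mathcal{C}_4$ (resp.\ $\mathcal{C}_8$): choose the $n_j$ as right endpoints of the dyadic level sets of the non-increasing function $k\mapsto\sup_{m\ge k}z(m)^{pq/(p-q)}(\cdots)$ appearing in that constant.
\end{itemize}
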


The following theorem was originally presented in \cite[Theorem 2.2]{GogKrep}. However, since cases (ii) and (iii) in the original version contain certain misprints, we provide the corrected statement here by incorporating the results from \cite{GPU}.

\begin{thm}
\label{T:main-antigop}
Let $0<p ,q <\infty$. Assume that $\bm{u}, \bm{v}$ and $\bm{w}$ are given non-negative sequences. Then, inequality 
\begin{equation*}
\Bigg(\sum_{m=1}^{\infty} \Bigg(\sup_{m \le i < \infty} u(i) \sum_{k=i}^{\infty} x(k) \Bigg)^{q} w(m) \Bigg)^{\frac{1}{q}} \le \mathfrak{C} \Bigg(\sum_{m=1}^{\infty} x(m)^p v(m)\Bigg)^{\frac{1}{p}}
\end{equation*}
holds for all non-negative sequences $\bm{x}$ if and only if 
	
{\textup{(i)}} $1 < p\le q$ and $\mathfrak{C}_1 < \infty$, where
\begin{align*}
\mathfrak{C}_1  :=  \sup_{1 \le n < \infty} \Bigg( \sum_{i=1}^n w(i) \sup_{i\le j\le n} u(j)^q \Bigg)^{\frac{1}{q}} \Bigg(\sum_{k=n}^{\infty} v(k)^{\frac{1}{1-p}} \Bigg)^{\frac{p-1}{p}}.  
\end{align*}
	
{\textup{(ii)}}  $1< p$, $q<p$ and $\max\{\mathfrak{C}_2, \mathfrak{C}_3\} < \infty$, where
\begin{align*}
\mathfrak{C}_2  & := \Bigg( \sum_{n=1}^{\infty}  \Bigg(\sum_{i=1}^{n} w(i) \Bigg)^{\frac{q}{p-q}} w(n) \sup_{n \le k < \infty} u(k)^{\frac{pq}{p-q}} \Bigg(\sum_{m=k}^{\infty} v(m)^{\frac{1}{1-p}} \Bigg)^{\frac{(p-1)q}{p-q}} \Bigg)^\frac{p-q}{pq},\\
\mathfrak{C}_3 & := \Bigg( \sum_{n=1}^{\infty} \Bigg(\sum_{i=1}^{n} w(i) \sup_{i\le j\le n} u(j)^q \Bigg)^{\frac{q}{p-q}} w(n) \sup_{n \le k < \infty} u(k)^q \Bigg(\sum_{m=k}^{\infty} v(m)^{\frac{1}{1-p}} \Bigg)^{\frac{(p-1)q}{p-q}} \Bigg)^\frac{p-q}{pq}.
\end{align*}
	
{\textup{(iii)}} $0 < p \le 1$,  $p \le q$ and $\mathfrak{C}_4 <\infty$, where
\begin{align*}
\mathfrak{C}_{4} := \sup_{1\le n <\infty} \Bigg( \sum_{i=1}^n w(i)\sup_{i\le j\le n}u(j)^q  \Bigg)^{\frac 1q} \sup_{n\le j<\infty} v(j)^{- \frac{1}{p}}.
\end{align*}
	
{\textup{(iv)}}  $0 < q < p \le 1$ and $\max\{\mathfrak{C}_{5}, \mathfrak{C}_{6}\} <\infty$, where
\begin{align*}
\mathfrak{C}_{5}&:=   \Bigg( \sum_{n=1}^\infty \Bigg( \sum_{i=1}^n w(i) \Bigg)^{\frac{q}{p-q}} w(n) \sup_{n\le k< \infty} u(k)^{\frac{pq}{p-q}} \sup_{k\le m <\infty} v(m)^{-\frac q{p-q}}  \Bigg)^\frac{p-q}{pq},\\
\mathfrak{C}_{6}&:= \Bigg( \sum_{n=1}^\infty \Bigg(\sum_{i=1}^n w(i) \sup_{i\le j\le n} u(j)^q \Bigg)^{\frac{q}{p-q}} w(n) \sup_{n\le k <\infty } u(k)^q  \sup_{k\le m <\infty} v(m)^{-\frac{q}{p-q}} \Bigg)^{\frac{p-q}{pq}}.
\end{align*}
Moreover, if $\mathfrak{C}$ denotes the best constant in inequality \eqref{E:d-gop}, then
\begin{equation*}
\mathfrak{C}  \approx
\begin{cases}
\mathfrak{C}_1  &\text{in the case \textup{(i)},} \\
\mathfrak{C}_2+\mathfrak{C}_3&\text{in the case \textup{(ii)},} \\
\mathfrak{C}_4 &\text{in the case \textup{(iii)},} \\
\mathfrak{C}_5+\mathfrak{C}_6 &\text{in the case \textup{(iv)}},
\end{cases}
\end{equation*}
where the multiplicative constants in all the equivalences above depend only on $p$ and $q$.
\end{thm}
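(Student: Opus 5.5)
The proof splits into the two regimes $0<p\le 1$ and $1<p<\infty$. In both, the first move is to remove the monotone structure hidden in the tail sums $F(i):=\sum_{k=i}^\infty x(k)$. Since $F$ is non-increasing, the quantity $\sup_{m\le i<\infty}u(i)F(i)$ is an $\ell^\infty$-type operator of ``anti-Hardy'' form. The whole problem is then governed by the interaction between the weight $\bm{w}$ and the auxiliary weight $m\mapsto\sup_{m\le i<\infty}u(i)^q$ restricted to finite windows. This is exactly why the conditions contain the expressions $\sum_{i=1}^n w(i)\sup_{i\le j\le n}u(j)^q$.

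\emph{Case $0<p\le 1$.} Put $y=x^p$. Since $(\sum_{k\ge i}x(k))^p\le\sum_{k\ge i}x(k)^p$, the inequality follows from
\[
\Big(\sum_m w(m)\big[\sup_{i\ge m}u(i)^p\textstyle\sum_{k\ge i}y(k)\big]^{q/p}\Big)^{p/q}\lesssim \sum_m v(m)y(m),
\]
which is the quantity $V_2$ of Theorem~\ref{T:ekv-antigop} with $r=q/p$ and $u^p$ in place of $u$. Conversely, testing the original inequality on one-point sequences $x=\lambda e_k$ shows that the original constant dominates the corresponding $V_1$. Indeed, for such $x$ we have $\sum_{j\ge i}x(j)=\sup_{j\ge i}x(j)$, and $V_1$ is attained, up to constants, on such extremal configurations.

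So Theorem~\ref{T:ekv-antigop} gives $\mathfrak{C}^p\approx V_1\approx V_2$. It remains to compute $V_1$. Using $\sup_{i\ge j}a(i)\le \big(\sum_n v(n)a(n)\big)\sup_{i\ge j}v(i)^{-1}$, with near-equality for spikes, $V_1$ reduces to a supremum-type Hardy inequality for non-increasing functions of $j$. For $p\le q$ this yields $\mathfrak{C}_4$: one takes a spike at $n$ and reads off $\sum_{i\le n}w(i)\sup_{i\le j\le n}u(j)^q$. For $q<p$ a standard discretization with respect to the level sets of $\sup_{k\ge\cdot}v^{-1}$ yields $\mathfrak{C}_5+\mathfrak{C}_6$.

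\emph{Case $p>1$.} I will discretize. First choose a covering sequence adapted to the non-increasing function $U(m):=\sup_{m\le i<\infty}u(i)$, and split
\[
\sup_{i\ge m}u(i)F(i)\le \sup_{m\le i\le n}u(i)F(i)+U(n)F(n),
\]
where $n$ is the end of the block containing $m$. This decomposition produces two pieces.
\begin{itemize}
\item The piece $U(n)F(n)$ is a genuine Copson/tail Hardy operator. It is handled by the classical weighted Copson criterion with $v^{1/(1-p)}$ tail sums, and yields the $\sup_k u(k)^{pq/(p-q)}$-terms, namely $\mathfrak{C}_1$ and $\mathfrak{C}_2$.
\item The local piece lives on a finite window. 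There $F(i)\ge F(n)$, so only the local weight $\sum_{i\le n}w(i)\sup_{i\le j\le n}u(j)^q$ enters; this gives $\mathfrak{C}_1$ and $\mathfrak{C}_3$.
\end{itemize}

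Necessity is obtained in two ways. For $p\le q$, I test on extremal sequences $x(k)=v(k)^{1/(1-p)}\chi_{[n,N]}(k)$. For $q<p$, I use duality: write $\|\cdot\|_{\ell^{q}(w)}$ through $\ell^{p/(p-q)}$ pairing, and then glue the block-wise extremals by the usual construction.

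I expect the main obstacle to be the case $q<p$ with $p>1$, specifically the necessity of $\mathfrak{C}_3$. The nested supremum $\sup_{i\le j\le n}u(j)^q$ interacts with the tail sums, so that neither a single-block test nor a standard Hardy argument suffices. My first attempt there will be to follow \cite{GPU}: replace $u$ by a step function adapted to the discretizing sequence of $\sum_{i\le n}w(i)\sup_{i\le j\le n}u(j)^q$, prove the inequality for the modified operator, and then compare the two.
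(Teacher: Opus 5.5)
The paper contains no proof of this statement: it is quoted from \cite[Theorem~2.2]{GogKrep}, with cases (ii) and (iii) corrected according to \cite{GPU}. Read as a self-contained proof, your proposal establishes only case (iii) and the necessity of $\mathfrak{C}_1$. The remaining cases have genuine gaps.

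\textbf{Case $0<p\le1$.} The reduction $\mathfrak{C}^p\le V_2\approx V_1$, with $u^p$ in place of $u$ and $r=q/p$ in Theorem~\ref{T:ekv-antigop}, is correct. Your justification of $V_1\lesssim\mathfrak{C}^p$ is not.
\begin{itemize}
\item For $q<p$, $V_1$ is \emph{not} attained on single spikes. Take $u\equiv1$, $w\equiv1$, $v(m)=m^{p/q}$: every spike gives ratio $1$, yet $\mathfrak{C}_5=\infty$ and the inequality fails.
\item The bound you need holds trivially for every $\bm a$. Test the inequality with $x=a^{1/p}$ and use $\sum_{k\ge i}x(k)\ge\sup_{k\ge i}x(k)$.
\item For $q\ge p$ you do get spikes. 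Apply Minkowski's inequality in $\ell^{q/p}(w)$ to $\sup_{i\ge n}a(i)\sup_{n\le j\le i}u(j)^p\le\sum_{i\ge n}a(i)\sup_{n\le j\le i}u(j)^p$. This gives $\mathfrak{C}_4$, so (iii) is fine.
\item For $q<p$, evaluating $V_1$ \emph{is} case (iv), and the tool you name does not do it. The pointwise bound $\sup_{i\ge j}a(i)\le\|a\|_{\ell^1(v)}\sup_{i\ge j}v(i)^{-1}$ only yields the sufficient condition $\sum_n w(n)\sup_{j\ge n}u(j)^q\sup_{i\ge j}v(i)^{-q/p}<\infty$. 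This condition is not necessary: for $u\equiv1$, $v(i)=2^i$, $w(n)=2^{nq/p}/n$ it is the harmonic series, while $\mathfrak{C}_5=\mathfrak{C}_6<\infty$.
\end{itemize}
The ``standard discretization'' is supposed to produce the factors $\big(\sum_{i\le n}w(i)\big)^{q/(p-q)}$ and $\big(\sum_{i\le n}w(i)\sup_{i\le j\le n}u(j)^q\big)^{q/(p-q)}$, together with matching lower bounds. That is exactly the missing content of (iv).

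\textbf{Case $p>1$.} Here the proposal describes the shape of the answer rather than proving it.
\begin{itemize}
\item The covering sequence is never specified, and the splitting does not reduce anything. The local piece $\sup_{m\le i\le n}u(i)F(i)$ contains $\sup_{m\le i\le n}u(i)\sum_{k=i}^{n-1}x(k)$, which is an operator of exactly the original type.
\item The remark $F(i)\ge F(n)$ is a lower bound. It is useful for testing, but it cannot estimate the local piece from above.
\item The term $U(n(m))F(n(m))$ has a block-dependent index. The classical Copson criterion therefore gives only a discretized condition, which must still be converted back into $\mathfrak{C}_1$ or $\mathfrak{C}_2$.
\item Your assignment of constants to pieces is off: $\mathfrak{C}_1$ contains no $\sup_k u(k)^{pq/(p-q)}$ term.
\item Necessity of $\mathfrak{C}_2$ and $\mathfrak{C}_3$ for $q<p$ is simply deferred to \cite{GPU}.
\end{itemize}
So sufficiency in (i), and both directions in (ii) and (iv), remain unproved. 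These are precisely the parts the paper imports from \cite{GogKrep,GPU}.
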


\section{Proofs}  \label{S5}

In this section, we present the proofs of the main results stated in Sections \ref{S3} and \ref{S2}.

\begin{proof}[{\bfseries Proof of Theorem~\ref{Th0}}]
The implication $\eqref{eq1}\implies\eqref{eq0}$ is straightforward: by taking $\bm{x}=(1,1,{\color{red}\ldots})$, we immediately obtain $C_0 \leq C$. 
	
Conversely, the validity of \eqref{eq0} yields
\begin{align*}
\bigg(\sum_{m=1}^{\infty} \bigg(\sup_{m\le i<\infty} u(i) \sum^i_{k=1}x(k)\bigg)^q a(m) \bigg)^{\frac{1}{q}} & \le x(1) \bigg(\sum_{m=1}^{\infty} \bigg(\sup_{m\le i<\infty} u(i) i \bigg)^q a(m) \bigg)^{\frac{1}{q}}\notag\\
&\hskip-5cm  = C_0
x(1)\bigg(\sum_{m=1}^{\infty} b(m) \bigg)^{\frac{1}{p}} \le C_0 b(1)^{-\frac{1}{p}} \bigg( \sum_{m=1}^{\infty} b(m) \bigg)^{\frac{1}{p}} \bigg(\sum_{m=1}^{\infty} x(m)^p b(m) \bigg)^{\frac{1}{p}}.
\end{align*}
Hence, inequality \eqref{eq1} holds with $C \leq C_0 b(1)^{-\frac{1}{p}} B(\infty)^{\frac{1}{p}}$.
\end{proof}

\begin{proof}[\bfseries Proof of Theorem \ref{Th1}] 
The necessity of the condition $C_0 < + \infty$ is established by testing inequality \eqref{eq1} with $\bm{x}=(1,1,\ldots)$. If $B(\infty) < \infty$, the definition of $C_0$ in \eqref{eq0} implies $C_0 \le C$; if $B(\infty) = \infty$, the inequality holds trivially as $C_0 = 0$. To prove the implication  $\eqref{eq1}\implies\eqref{eq2}$, let $\bm{y}$ be an arbitrary non-negative sequence. If the right-hand side of \eqref{eq2} is infinite, the inequality holds trivially. Therefore, we may assume that the right-hand side is finite. This implies that the right-hand side of \eqref{eq12} is also finite, which ensures $x(k)=\sum_{j=k}^\infty y(j)$ is finite for each $k \in \mathbb{N}$ and satisfies $\lim_{k\to \infty} x(k) = 0$. Taking such $\bm{x}$ in \eqref{eq1} and applying \eqref{eq12} yields \eqref{eq2}.
	
Now, for the converse, assume that $1 < p < \infty$, $C_0 < +\infty$ and inequality \eqref{eq2} holds for all non-negative sequences $\bm{y}$. Let $\bm{x}$ be a non-negative and non-increasing sequence. We first observe that, by using \eqref{prl1}, \eqref{prl2-} and the monotonicity of $\bm{x}$, we obtain
\begin{align*}
x(k)=&B(k)\bigg(\frac{1}{B(k)}-\frac{1}{B(\infty)}\bigg)x(k)+ \frac{B(k)}{B(\infty)}x(k) \\
&\le p^{\frac{1}{p}}\bigg(\sum_{m=1}^k B(m)^{p-1}b(m)x(m)^p\bigg)^{\frac{1}{p}}\sum_{i=k}^\infty \frac{b({i+1})}{B(i)B({i+1})}\notag\\
&\hskip+1cm + \frac{B(k)^{1-\frac{1}{p}}}{B(\infty)}\bigg(\sum_{m=1}^kx(m)^pb(m)\bigg)^{\frac{1}{p}}
\\
&\le p^{\frac{1}{p}} \sum_{i=k}^\infty \frac{b({i+1})}{B(i)B({i+1})} \bigg(\sum_{m=1}^i B(m)^{p-1}b(m)x(m)^p\bigg)^{\frac{1}{p}}\\
&\hskip+1cm + B(\infty)^{-\frac{1}{p}} \bigg(\sum_{m=1}^\infty x(m)^p b(m) \bigg)^{\frac{1}{p}}.
\end{align*}
	
By combining this estimate with the sub-additivity of the supremum, the well-known inequality 
\[(a+b)^{1/q}\le \max\bigg(2^{\frac{1}{q}-1}, 1\bigg) (a^{1/q}+b^{1/q}),\] 
and the validity of inequality \eqref{eq2} and \eqref{eq0}, yields
\begin{align*}
&\bigg(\sum_{m=1}^{\infty} \bigg(\sup_{m\le j<\infty} u(j)\sum_{k=1}^j x(k) \bigg)^q a(m) \bigg)^{\frac{1}{q}} \notag\\
& \hskip1cm \le p^{\frac{1}{p}} \max\big(2^{\frac{1}{q}-1}, 1\big) \bigg(\sum_{m=1}^{\infty} \bigg[\sup_{m\le j<\infty} u(j) \sum_{k=1}^j \sum_{i=k}^\infty \frac{b({i+1})}{B(i)B({i+1})} \\
& \hskip5.5cm \times \bigg(\sum_{m=1}^i B(m)^{p-1}b(m) x(m)^p \bigg)^{\frac{1}{p}} \bigg]^q a(m)\bigg)^{\frac{1}{q}} \notag\\
&\hskip1.5cm +  \max\big(2^{\frac{1}{q}-1}, 1\big) \bigg(\sum_{m=1}^{\infty} \bigg(\sup_{m \le j < \infty} u(j) j \bigg)^q a(m) \bigg)^{\frac{1}{q}} B(\infty)^{-\frac{1}{p}} \bigg(\sum_{m=1}^\infty x(m)^p b(m) \bigg)^{\frac{1}{p}}\\
& \hskip1cm \le \mathcal{D} p^{\frac{1}{p}} \max\big(2^{\frac{1}{q}-1}, 1\big) 
\bigg(\sum_{i=1}^{\infty} \bigg(\sum_{m=1}^i B(m)^{p-1} b(m) x(m)^p \bigg)\frac{b({i+1})}{B(i)^{p-1} B({i+1})} \bigg)^{\frac{1}{p}}\\
&\hskip1.5cm + C_0 \max\big(2^{\frac{1}{q}-1}, 1\big) \bigg(\sum_{m=1}^\infty x(m)^p b(m) \bigg)^{\frac{1}{p}}.
\end{align*}
Applying \eqref{eqfubini} and \eqref{prl2-}, we obtain
\begin{align*}
&\bigg(\sum_{m=1}^{\infty} \bigg(\sup_{m\le j<\infty} u(j)\sum_{k=1}^j x(k)\bigg)^qa(m)\bigg)^{\frac{1}{q}}\notag\\
& \hskip1cm \le \mathcal{D} p^{\frac{1}{p}} \max\big(2^{\frac{1}{q}-1}, 1\big) \bigg(\sum_{m=1}^{\infty} B(m)^{p-1} b(m) x(m)^p \sum_{i=m}^\infty \frac{b({i+1})} {B(i)^{p-1}B({i+1})}\bigg)^{\frac{1}{p}}\notag\\
&\hskip+1cm + C_0 \max\big(2^{\frac{1}{q}-1}, 1\big)\bigg(\sum_{m=1}^\infty x(m)^p b(m) \bigg)^{\frac{1}{p}}\\
&\hskip1cm \le \bigg(C_0+\mathcal{D} p^{\frac{1}{p}}\max\bigg(\frac{1}{p-1},1\bigg)^{\frac{1}{p}}\bigg) \max\big(2^{\frac{1}{q}-1}, 1\big) \bigg(\sum_{i=1}^{\infty} x(m)^p b(m) \bigg)^{\frac{1}{p}}.
\end{align*}
Consequently, \eqref{eq1} holds with $C \le  \max\bigg(1,  p^{\frac{1}{p}}  \max\big(\frac{1}{p-1},1\big)^{\frac{1}{p}}\bigg) \max\big(2^{\frac{1}{q}-1}, 1\big)\bigg(\mathcal{D}+C_0\bigg)$.
\end{proof}

\begin{proof}[{\bfseries Proof of Theorem~\ref{Th2}}] 
Let $\bm{y}$ denote a non-negative sequence. It is easy to see that
\begin{align*} 
\sup_{m\le i<\infty} u(i) \bigg(\sup_{1\le k\le i} k^p\sup_{k\le j\le \infty} y(j)\bigg)^{\frac{1}{p}} &  \le \sup_{m\le i<\infty} u(i)\sup_{1\le k\le i}k \bigg(\sum_{j=k}^\infty y(j)\bigg)^{\frac{1}{p}}\notag \\
&\le \sup_{m\le i<\infty}u(i) \sum_{k=1}^i\bigg(\sum_{j=k}^\infty y(j)\bigg)^{\frac{1}{p}}.
\end{align*}
Therefore, we have 
\[
\textup{(ii)}\implies \textup{(iv)}\implies \textup{(vi)}.
\]
	
It is also apparent that 
\begin{align} \label{est2}
\sup_{m\le i<\infty} u(i) \bigg(\sup_{1\le k\le i} k^p\sup_{k\le j < \infty} y(j)\bigg)^{\frac{1}{p}}  & \le \sup_{m\le i<\infty}u(i) \sum_{k=1}^i\bigg(\sup_{k\le j< \infty} y(j)\bigg)^{\frac{1}{p}}\notag\\
&\le \sup_{m\le i<\infty}u(i) \sum_{k=1}^i\bigg(\sum_{j=k}^\infty y(j)\bigg)^{\frac{1}{p}}. 
\end{align}
As a result of \eqref{est2}, we get
\[
\textup{(ii)}\implies \textup{(v)}\implies \textup{(vi)}.
\]

Next, we establish the implication $\textup{(iii)}\implies \textup{(ii)}$. Note that for $p=1$, statements $\textup{(ii)}$ and  $\textup{(iii)}$ coincide. For $p<1$, using \eqref{prl1}, observe that, for any $m\in \mathbb{N}$,
\begin{equation}\label{GPSL-condition}
\bigg(\sum_{k=1}^m 1\bigg)^p \leq  \sum_{k=1}^m k^{p-1}    
\end{equation}
holds. Then, applying Lemma~\ref{lemma_03} with $a(k)=1$, $b(k)=k^{p-1}$ and $x(k)= \big(\sum_{j=k}^\infty y(j)\big)^{\frac{1}{p}}$, we obtain
\[\sum_{k=1}^i\bigg(\sum_{j=k}^\infty y(j)\bigg)^{\frac{1}{p}} \le  \bigg(\sum_{k=1}^i k^{p-1}\sum_{j=k}^\infty y(j)\bigg)^{\frac{1}{p}}, \quad i\in \mathbb{N}.
\]
Multiplying both sides by $u(i)$ and taking the supremum, we get
\begin{equation}\label{GPSL-result}
\sup_{m\le i<\infty}u(i)\sum_{k=1}^i\bigg(\sum_{j=k}^\infty y(j)\bigg)^{\frac{1}{p}} \le  \sup_{m\le i<\infty}u(i)\bigg(\sum_{k=1}^i k^{p-1}\sum_{j=k}^\infty y(j)\bigg)^{\frac{1}{p}}.   
\end{equation}
Thus, \eqref{GPSL-result} yields 
\[
\textup{(iii)}\implies \textup{(ii)}.
\]
Now, we show that $\textup{(vi)} \implies \textup{(iii)}$. We have
\begin{align*}
&\sup_{m\le i<\infty} u(i)^p \sup_{1\le k\le i} k^p \sup_{k\le j < \infty} y(j) \\
& = \sup_{m\le i<\infty} u(i)^p \max\bigg\{\sup_{1\le k\le i} k^{p} \sup_{k\le j\le i} y(j), \; i^p \sup_{i\le j < \infty} y(j)\bigg\} \\
& = \sup_{m\le i<\infty} u(i)^p  \max\bigg\{\sup_{1\le {\color{red}j}\le i} j^{p} y(j), \; i^{p} \sup_{i\le j < \infty} y(j)\bigg\}\\
& = \max\bigg\{\sup_{m\le i<\infty} u(i)^p \sup_{1\le {\color{red}j}\le i} j^p y(j), \sup_{m\le i<\infty} u(i)^p i^p \sup_{i\le j < \infty} y(j)\bigg\}.
\end{align*}
Thus, if inequality \eqref{eq7} holds, then
\begin{equation} \label{for6-1}
\bigg(\sum_{m=1}^\infty\bigg(\sup_{m\le i<\infty} u(i)^p \sup_{1\le j\le i} j^p  y(j)\bigg)^{\frac{q}{p}} a(m)\bigg)^{\frac{1}{q}} \leq \D_5\bigg(\sum_{m=1}^\infty y(m) B(m) \bigg)^{\frac{1}{p}} 
\end{equation}
and
\begin{equation} \label{for6-2}
\bigg(\sum_{m=1}^\infty\bigg( \sup_{m\le i<\infty} u(i)^p  i^p \sup_{i\le j < \infty} y(j)\bigg)^{\frac{q}{p}} a(m)\bigg)^{\frac{1}{q}} \leq \D_5\bigg(\sum_{m=1}^\infty y(m) B(m) \bigg)^{\frac{1}{p}} 
\end{equation}
hold, as well. Using Theorem~\ref{T:ekv-gop} with $r=q/p$, more precisely, the equivalence $S_1\approx S_2$, the validity of inequality \eqref{for6-1} yields
\begin{equation} \label{for6-1-1}
\bigg(\sum_{m=1}^\infty\bigg(\sup_{m\le i<\infty} u(i)^p \sum_{j=1}^i j^p  y(j)\bigg)^{\frac{q}{p}} a(m)\bigg)^{\frac{1}{q}} \lesssim \bigg(\sum_{m=1}^\infty y(m) B(m) \bigg)^{\frac{1}{p}},
\end{equation}
and analogously, using Theorem~\ref{T:ekv-antigop} with $r=q/p$, more precisely, the equivalence $V_1 \approx V_2$, the validity of inequality \eqref{for6-2} yields
\begin{equation} \label{for6-2-1}
\bigg(\sum_{m=1}^\infty\bigg( \sup_{m\le i<\infty} u(i)^p  i^p \sum_{j=i}^\infty y(j)\bigg)^{\frac{q}{p}} a(m)\bigg)^{\frac{1}{q}} \lesssim \bigg(\sum_{m=1}^\infty y(m) B(m) \bigg)^{\frac{1}{p}}. 
\end{equation}
	
Furthermore, by using Lemma~\ref{Fubini} and the fact that $\sum_{k=1}^i k^{p-1}\le \frac{1}{p}i^p$, which easily follows from the first estimate of Lemma~\ref{prl1} by taking $b(k)=1$, $k=1,\ldots,i$, we get 
\begin{align} \label{Fubini-result}
\sum_{k=1}^i k^{p-1}\sum_{j=k}^\infty y(j) 
& = \sum_{k=1}^i k^{p-1} \sum_{j=k}^i y(j)+ \sum_{k=1}^i k^{p-1} \sum_{j=i+1}^\infty y(j) \notag\\
& = \sum_{j=1}^i y(j) \sum_{k=1}^j k^{p-1} + \sum_{k=1}^i k^{p-1}\sum_{j=i+1}^\infty y(j) \notag\\
& \le \frac{1}{p} \bigg( \sum_{j=1}^i j^p y(j) +  i^{p} \sum_{j=i}^\infty y(j) \bigg).
\end{align}
Therefore, in view of \eqref{Fubini-result}, we have
\begin{align*}
\sup_{m\le i<\infty}u(i) \bigg(\sum_{k=1}^i k^{p-1}\sum_{j=k}^\infty y(j)\bigg)^{\frac{1}{p}} 
&\le  p^{-\frac{1}{p}} \sup_{m\le i<\infty} u(i)\bigg(\sum_{j=1}^i j^p y(j) +  i^p \sum_{j=i}^\infty y(j)\bigg)^{\frac{1}{p}}.
\end{align*}
Using the sub-additivity of the supremum, we have
\begin{align}\label{est1}
&\sup_{m\le i<\infty}u(i) \bigg(\sum_{k=1}^i k^{p-1}\sum_{j=k}^\infty y(j)\bigg)^{\frac{1}{p}}\notag \\
& \hskip+1cm \le 2^{\frac{1}{p}-1}p^{-\frac{1}{p}}\bigg(\sup_{m\le i<\infty} u(i) \bigg(\sum_{j=1}^i j^p y(j)\bigg)^{\frac{1}{p}} + \sup_{m\le i<\infty} u(i) i \bigg(\sum_{j=i}^\infty y(j)\bigg)^{\frac{1}{p}}\bigg).
\end{align}
Assuming that inequality \eqref{eq7} holds, it follows that \eqref{for6-1-1} and \eqref{for6-2-1} are also satisfied. Therefore, taking \eqref{est1} into consideration, we conclude that \eqref{eq4} holds, which yields
\[
\textup{(vi)}\implies \textup{(iii)}.
\]
To complete the proof, we establish the equivalence $\textup{(i)} \iff \textup{(ii)}$ by showing that \eqref{eq1} holds for all non-negative non-increasing sequences $\bm{x}$ if and only if \eqref{eq3} holds for all non-negative sequences $\bm{y}$.

Let $\bm{y}$ be an arbitrary non-negative sequence and assume that \eqref{eq1} holds for all non-negative, non-increasing sequences $\bm{x}$. Without loss of generality, assume that the right-hand side of \eqref{eq3} is finite, which means the sequence $x(k) = \big(\sum_{j=k}^{\infty} y(j)\big)^{1/p}$, $k \in \mathbb{N}$ is finite, non-negative, non-increasing, and satisfies $\lim_{k \to \infty} x(k) = 0$. Testing inequality \eqref{eq1} with $x(k)=\big(\sum_{j=k}^{\infty} y(j)\big)^{1/p}$ and applying Lemma~\ref{Fubini} for the right-hand side, we obtain $\textup{(ii)}$. Then, we have
\[
\textup{(i)} \implies \textup{(ii)}.
\]
Now, we will show the implication $\textup{(ii)} \implies \textup{(i)}$.

Testing inequality \eqref{eq3} with  the sequence $\bm{y}$ defined as  $y(k)=0$ if $k\not=n$ and $y(n)=1$, we obtain that 
\[ \bigg(\sum_{k=1}^n \bigg(\sup_{k\le i\le n}u(i)i\bigg)^q a(k) \bigg)^{\frac{1}{q}}\le \D_1 B(\infty)^{\frac{1}{p}}.
\]
Letting $n\to+\infty$, we get 
\begin{equation} \label{last}
\bigg(\sum_{k=1}^\infty \bigg(\sup_{k\le i\le \infty}u(i)i\bigg)^q a(k) \bigg)^{\frac{1}{q}}\le \D_1 B(\infty)^{\frac{1}{p}}. 
\end{equation}

On the other hand, let $\bm{x}$ be an arbitrary non-negative and non-increasing sequence and let $x(\infty):= \lim_{k\to \infty} x(k)$. Define the sequence  $y(k)=x(k)^p-x(k+1)^p$ for all $k \in \mathbb{N}$. If $x(\infty) = 0$, substituting  $y(k)$ in  \eqref{eq3}, and using \eqref{Fubini}, yields \eqref{eq1}. 

Now, assume that $x(\infty)> 0$. If $B(\infty)= \infty$, then the right-hand side of \eqref{eq1} is infinite, therefore inequality \eqref{eq1} holds trivially. Let us now assume that $B(\infty) <\infty$. Since  $y(k)=x(k)^p-x(k+1)^p$, using the equality $x(k)^p = \sum_{j=k}^\infty y(j) + x(\infty)^p$, we have
\begin{equation}\label{last_xk}
x(k) = \bigg(x(k)^p \bigg)^{\frac{1}{p}} \le 2^{\frac{1}{p}-1}\bigg(\sum_{j=k}^\infty y(j)\bigg)^{\frac{1}{p}}+2^{\frac{1}{p}-1}x(\infty).
\end{equation}

Therefore, combining \eqref{last_xk} with \eqref{eq3} and \eqref{last}, and invoking \eqref{Fubini} alongside the monotonicity of $\bm{x}$, we arrive at
\begin{align*} 
&\bigg(\sum_{m=1}^\infty \bigg(\sup_{m\le i < \infty} u(i)\sum_{k=1}^i x(k)\bigg)^q a(m) \bigg)^{\frac{1}{q}}\\
&\hskip+2cm\le 2^{\frac{1}{p}-1} \max\bigg(2^{\frac{1}{q}-1}, 1\bigg) \bigg(\sum_{m=1}^\infty \bigg(\sup_{m\le i< \infty} u(i)\sum_{k=1}^i \bigg(\sum_{j=k}^\infty y(j)\bigg)^{\frac{1}{p}} \bigg)^q a(m) \bigg)^{\frac{1}{q}}\\
&\hskip+3cm+2^{\frac{1}{p}-1}\max\bigg(2^{\frac{1}{q}-1}, 1\bigg)\bigg(\sum_{m=1}^\infty \bigg(\sup_{m\le i < \infty} u(i) \sum_{k=1}^i x(\infty)\bigg)^q a(m) \bigg)^{\frac{1}{q}}\\
&\hskip+2cm \le2^{\frac{1}{p}-1}\max\bigg(2^{\frac{1}{q}-1}, 1\bigg) \D_1\bigg(\sum_{k=1}^\infty (x(k)^p-x(\infty)^p) b(k)\bigg)^{\frac{1}{p}} \\
&\hskip+3cm+2^{\frac{1}{p}-1}\max\bigg(2^{\frac{1}{q}-1}, 1\bigg) \D_1 x(\infty)\bigg(\sum_{k=1}^\infty b(k)\bigg)^{\frac{1}{p}}\\
&\hskip+2cm\le 2^{\frac{1}{p}}\max\bigg(2^{\frac{1}{q}-1}, 1\bigg) \D_1\bigg(\sum_{k=1}^\infty x(k)^pb(k)\bigg)^{\frac{1}{p}}, 
\end{align*}
which yields the desired estimate \eqref{eq1}.
\end{proof}

\begin{proof}[{\bfseries Proof of Corollary~\ref{cor2.7}}]

{\textup(i)}   By Theorem~\ref{Th1},  inequality \eqref{eq1} holds for all non-negative non-increasing sequences $\bm{x}$ if and only if $C_0 < +\infty$, where $C_0$ is defined in \eqref{eq0}, and \eqref{eq2} holds for all non-negative sequences $\bm{y}$.

Since \eqref{eqfubini} gives 
\begin{align*}
\max &\bigg\{\sup_{n\le i<\infty}u(i)\sum_{j=1}^i j y(j) , \sup_{n\le i<\infty} u(i) i \sum_{j=i}^\infty y(j) \bigg\} \\
& \qquad \leq \sup_{n\le i<\infty}u(i)\sum_{k=1}^i\sum_{j=k}^\infty y(j)  \leq \sup_{n\le i<\infty}u(i)\sum_{j=1}^i j y(j) + \sup_{n\le i<\infty}u(i) i \sum_{j=i}^\infty y(j),
\end{align*}
we obtain 
\[ \eqref{eq2} \Leftrightarrow \big(\eqref{cor1} \,\& \, \eqref{cor2}\big).\]
	
{\textup(ii)} By Theorem~\ref{Th2}, \eqref{eq1} holds for all non-negative non-increasing sequences $\bm{x}$ if and only if \eqref{eq4} holds for all non-negative sequences $\bm{y}$.

Moreover, observe that using \eqref{eqfubini} and \eqref{GPSL-condition}, we have 
\begin{align}\label{revest1}
\sup_{m\le i<\infty} &u(i) \bigg(\sum_{k=1}^i k^{p-1}\sum_{j=k}^\infty y(j)\bigg)^{\frac{1}{p}}\notag \\
&\; \ge \max\bigg\{\sup_{m\le i<\infty}u(i) \bigg(\sum_{k=1}^i k^{p-1}\sum_{j=k}^i y(j)\bigg)^{\frac{1}{p}}, \sup_{m\le i<\infty}u(i) \bigg(\sum_{k=1}^i k^{p-1}\sum_{j=i}^\infty y(j)\bigg)^{\frac{1}{p}} \bigg\}\notag \\
&\; \ge \max\bigg\{\sup_{m\le i<\infty}u(i) \bigg(\sum_{j=1}^i y(j) \sum_{k=1}^j k^{p-1} \bigg)^{\frac{1}{p}}, \sup_{m\le i<\infty}u(i) \bigg(\sum_{k=1}^i k^{p-1}\sum_{j=i}^\infty y(j)\bigg)^{\frac{1}{p}} \bigg\}\notag \\
&\; \ge \max\bigg\{\sup_{m\le i<\infty}u(i) \bigg(\sum_{j=1}^i y(j) j^p \bigg)^{\frac{1}{p}}, \sup_{m\le i<\infty}u(i) i\bigg(\sum_{j=i}^\infty y(j)\bigg)^{\frac{1}{p}} \bigg\}
\end{align}
holds. Therefore, using \eqref{est1} and \eqref{revest1}, we arrive at 
\[ 
\eqref{eq4} \Leftrightarrow \big(\eqref{cor3} \, \& \, \eqref{cor4}\big).
\]
\end{proof}

\begin{proof}[{\bfseries Proof of Theorem~\ref{thmmain}}]

The proof follows easily from   Corollary~\ref{cor2.7} combined with Theorem~\ref{T:main-discrete-p-big}, Theorem~\ref{T:main-antigop} and \eqref{prl2-}. 

In case (i), we obtain $C \approx C_0 +C_1+C_2+C_3'$, where  
\begin{equation}
C_3' :=\bigg(\sup_{1\le n<\infty}  \sum_{i=1}^{n} a(i) \sup_{i\le j\le n} u(j)^q j^q \bigg)^{\frac{1}{q}} \bigg( B(n)^{-\frac{1}{p-1}} -B(\infty) ^{-\frac{1}{p-1}}\bigg)^{\frac{p-1}{p}}.
\end{equation}
Observe that
\begin{align*}
C_3'& \leq \sup_{1\le n<\infty}  \bigg(\sum_{i=1}^{n} a(i) \sup_{i\le j\le n} u(j)^q j^q \bigg)^{\frac{1}{q}} B(n)^{-\frac{1}{p}} = C_3 \lesssim C_0+ C_3'.
\end{align*}
Using this estimate, we can replace $C_3'$ with $C_3$ to deduce that $C \approx C_0+C_1+C_2+C_3$. Case (ii) can be proved similarly. 

In case (iv), the application of Corollary~\ref{cor2.7} together with Theorem~\ref{T:main-discrete-p-big}, Theorem~\ref{T:main-antigop}, we obtain $C\approx C_{11}+ C_{12}+{C_{13}}+ C_{13}'$, where
\begin{equation*}
C_{13}' : =\Bigg( \sum_{n=1}^{\infty}  A(n)^{\frac{q}{p-q}} a(n) \sup_{n \le k < \infty} u(k)^{\frac{pq}{p-q}} k^{\frac{pq}{p-q}} B(k)^{-\frac{q}{p-q}} \Bigg)^\frac{p-q}{pq}.
\end{equation*}
It is clear that $C_{13}'\leq C_{12}$. Thus, we obtain $C\approx C_{11}+ C_{12}+{C_{13}}$.	
\end{proof}


\begin{thebibliography}{99}
	
\bibitem{Bennett Gross}  
G.~Bennett, and K.-G.~Grosse-Erdmann, On series of positive terms. \emph{Houston J. Math.} {\textbf{31}} (2005), no.~2, 541--586.
	
\bibitem{Bennett Gross 1}
G.~Bennett, and K.-G.~Grosse-Erdmann, Weighted Hardy inequalities for decreasing sequences and functions.  \emph{Math. Ann.} \textbf{334} (2006), no.~3, 489--531.

\bibitem{BGKKT}	N.~Bokayev, A.~Gogatishvili,  G.~Karshygina, N.~Kuzeubayeva, and T.~ \"Unver, Reduction theorems for the discrete Hardy operator on the cones of monotone sequences. \emph{J. Math. Sci.} \textbf{291} (2025), no. 2, 217--223.

\bibitem{CKOP} A.~Cianchi, R.~Kerman, B.~Opic, and  L.~Pick. A sharp rearrangement inequality for fractional maximal operator. \emph{Studia Math.} \textbf{138} (2000), no.~3,  277--284.

\bibitem{Cop} E.T.~Copson, Note on series of positive terms.  \emph{J. London Math. Soc.} \textbf{2} (1927), 9--12.
	
\bibitem{GogKrep} 
A.~Gogatishvili, M.~K\v repela, R.~Ol'hava, and L.~Pick, Weighted inequalities for discrete iterated Hardy operators. \emph{Mediterr. J. Math.} \textbf{17} (2022), no.~4, 132.
	
\bibitem{GogMus-MIA} 
A.~Gogatishvili, and R.~Mustafayev, Iterated Hardy-type inequalities involving suprema. \emph{Math. Inequal. Appl.} \textbf{20} (2017), no.~4, 901--927.

\bibitem{GOP} A.~Gogatishvili, B.~Opic, and L.~Pick, Weighted inequalities for Hardy-type operators involving suprema. \emph{Collect. Math.} {\textbf 57} (2006), no.~3, 227--255.

\bibitem{GogPick-nrt} 
A.~Gogatishvili, and L.~Pick, A reduction theorem for supremum operators. \emph{J. Comput. Appl. Math.} \textbf{208} (2007), no.~1, 270--279.

\bibitem{GPU}
A.~Gogatishvili, L.~Pick, and T. \"{U}nver, Weighted inequalities for discrete iterated kernel
operators. \emph{Math. Nachr.} \textbf{295} (2022), no.~11, 1--26.

\bibitem{GogStep1} 
A.~Gogatishvili, and V.D.~Stepanov, Reduction theorems for operators on the cones of monotone functions. \emph{J. Math. Anal. Appl.} \textbf{405} (2013), no.~1, 156--172.
	
\bibitem{GogStep2} 
A.~Gogatishvili, and V.D.~Stepanov, Reduction theorems for weighted integral inequalities on the cone of monotone functions. \emph{Russ. Math. Surv.} \textbf{64} (2013), no.~4, 597--664.
	
\bibitem{Grosse} 
K.-G.~Grosse-Erdmann,
\emph{The blocking technique, weighted mean operators and Hardy's inequality.} Lecture Notes in Mathematics, no.~1679, Berlin, 1998.
	
	
\bibitem{OinarShalg} 
R.~Oinarov, and S.Kh.~Shalginbaeva, Weighted Hardy inequalities on the cone of monotone sequences. \emph{Izv. Minister. Nauki Akad. Nauk Resp. Kaz. Ser. Fiz.-Mat.} (1998), no.~1, 33--42.
	
\bibitem{Sawyer} 
E.~Sawyer, Boundedness of classical operators on classical Lorentz spaces. \emph{Studia Math.} \textbf{96} (1990), no.~2, 145--158.
	
\bibitem{Sinnamon} 
G.~Sinnamon, Hardy's inequality and monotonicity, Function Spaces and Nonlinear Analysis. Mathematical Institute of the Academy of Sciences of the Czech Republic, Prague (2005), 292--310 \url{https://www.math.uwo.ca/faculty/sinnamon/pdf/fsdona.pdf}
	

\end{thebibliography}
\end{document}